\documentclass[10pt]{article}
\font\smallit=cmti10

\usepackage{amssymb,latexsym,amsmath,epsfig,amsthm} 

\makeatletter

\renewcommand\section{\@startsection {section}{1}{\z@}
{-30pt \@plus -1ex \@minus -.2ex}
{2.3ex \@plus.2ex}
{\normalfont\normalsize\bfseries\boldmath}}

\renewcommand\subsection{\@startsection{subsection}{2}{\z@}
{-3.25ex\@plus -1ex \@minus -.2ex}
{1.5ex \@plus .2ex}
{\normalfont\normalsize\bfseries\boldmath}}

\renewcommand{\@seccntformat}[1]{\csname the#1\endcsname. }

\makeatother

\newtheorem{theorem}{Theorem}
\newtheorem{lemma}[theorem]{Lemma}

\newtheorem{corollary}[theorem]{Corollary}

\theoremstyle{definition}
\newtheorem{definition}{Definition}
\newtheorem{conjecture}{Conjecture}
\newtheorem{remark}{Remark}

\begin{document}

\begin{center}
\uppercase{\bf \boldmath A Structure Sheaf for Kirch Topology}
\vskip 20pt
{\bf Alexander Borisov}\\
{\smallit Department of Mathematics and Statistics, Binghamton University, Binghamton, New York, USA}\\
{\tt aborisov@binghamton.edu}\\ 
\end{center}
\vskip 20pt
\vskip 30pt

\centerline{\bf Abstract}
\noindent
Kirch topology on $\mathbb N$ goes back to 1969, and is remarkable for being Hausdorff,  connected, and locally connected. In this sense, it is analogous to the usual topology on $\mathbb C,$ yet, to the author's knowledge, there have been no Kirch topology analogs of the sheaf of complex-analytic functions until very recently. In our paper \cite{LIP} we constructed such natural sheaf of rings, the sheaf of locally LIP functions. In this paper we investigate some of its basic properties, primarily regarding zeroth and first cohomology and $\check{\textrm C}$ech cohomology with respect to covers by basic open sets.

\thispagestyle{empty}
\baselineskip=12.875pt
\vskip 30pt

\section{Introduction}

In this paper $\mathbb N$ will mean the set of all positive integers and $\mathbb N_0$ the set of all non-negative integers. By an arithmetic progression in $\mathbb N$ we mean an infinite arithmetic progression: a set of the form $a+d\mathbb N_0$ for some natural $a$ and $d.$

Kirch topology on $\mathbb N$ is defined as the topology with the basis of open sets formed by all infinite arithmetic progressions $a+d\mathbb N_0,$ where  $\gcd (a,d)=1$ and $d$ is square-free. This topology is Hausdorff, connected, and locally connected (cf. \cite{Kirch}, \cite{Szczuka}). 

There is also another version of this definition, where $\mathbb N$ is replaced by $\mathbb Z \setminus \{0\}.$ It has the same topological properties and can be easily generalized to other domains (cf. \cite{Clarketal}). In this paper we will present our results in the classical setting over $\mathbb N$, but all lemmas and theorems have natural analogs for $\mathbb Z \setminus \{0\}$ and proofs are valid with just minimal notational modifications. 

To explain our results, we first introduce some terminology.

\begin{definition} An arithmetic progression in $\mathbb N$ is called full if it is not contained in a strictly bigger arithmetic progression with the same common difference. Alternatively, full arithmetic progressions are intersections with $\mathbb N$ of doubly-infinite arithmetic progressions in $\mathbb Z.$
\end{definition}

\begin{definition}Suppose $X\subseteq \mathbb N$ is a set.  We will call the intersection of all full arithmetic progressions in $\mathbb N$ that contain $X$ the arithmetic convex closure (AC closure, for brevity) of $X,$ to be denoted by $<X>$.  If $X=\{x_1,x_2,\dots , x_n\},$ we will use $<x_1, x_2,\dots , x_n>.$
\end{definition}

\begin{definition} A subset $U$ of $\mathbb N$ will be called basic if it is a full arithmetic progression $a+d\mathbb N_0$ with $gcd(a,d) =1$ and square-free $d$.
\end{definition}

\begin{definition} A subset $U$ of $\mathbb N$ will be called almost basic if it is obtained from a basic set by removing a closed subset of (natural) density zero.
\end{definition}

\begin{remark} Because Kirch topology is Hausdorff, all finite sets are closed. So removing from an almost basic set a finite subset keeps it almost basic. In particular, all arithmetic progressions $a+d\mathbb N_0$ with $gcd(a,d) =1$ are almost basic.
\end{remark}

\begin{remark} If $|X|\leq 1 ,$ then $<X>= X.$ If $|X|\geq 2,$ then $<X>$ is itself a full arithmetic progression. If $X$ is an almost basic set, then $<X>$ is a basic set and $<X>\setminus \ X$ is a closed set of density zero.
\end{remark}

\begin{lemma} Suppose $U$ is an almost basic set. Then it is open and for any $n\geq 2$ and any finite subset $\{x_1,\dots ,x_n\}$ of $U$ the set $<x_1,\dots,x_n>\cap \ U$ is infinite.
\end{lemma}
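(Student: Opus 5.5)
Write $U = B\setminus Z$, where $B = a+d\mathbb N_0$ is basic (so $\gcd(a,d)=1$ and $d$ is square-free) and $Z$ is a closed subset of $B$ of density zero.

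\emph{Openness.} $B$ is open because it is a basic open set, and $\mathbb N\setminus Z$ is open because $Z$ is closed. Hence $U = B\cap(\mathbb N\setminus Z)$ is open.

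\emph{Infinitude.} Let $x_1,\dots,x_n\in U$ be distinct with $n\ge 2$, and put $g=\gcd(x_2-x_1,\dots,x_n-x_1)$. By Remark 2, $<x_1,\dots,x_n>$ is a full arithmetic progression. Its common difference is $g$, since the smallest full progression containing the $x_i$ is the set of elements of $\mathbb N$ congruent to $x_1$ modulo $g$. Every $x_i$ lies in $B$, so $d \mid x_i - x_1$ for all $i$, and therefore $d\mid g$. Consequently
\[
C \;:=\; <x_1,\dots,x_n> \;=\; \{\, m\in\mathbb N : m\equiv x_1 \pmod g \,\} \;\subseteq\; B .
\]
This gives $C\cap U = C\setminus Z$. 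Now $C$ is a full arithmetic progression with difference $g$, so it has positive density $1/g$, while $Z$ has density zero. Therefore $C\setminus Z$ has density $1/g>0$, and in particular it is infinite.

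\emph{Where care is needed.} The key step is recognising that the AC closure of the $x_i$ is contained in $B$. This uses that $B$ is full and that $C$ is exactly the residue class of $x_1$ modulo $g$. The density comparison is then immediate, and it is the only place the density-zero hypothesis on $Z$ is used.

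Closedness of $Z$ is needed only for openness. The argument also does not need $d$ to be square-free or $\gcd(a,d)=1$ beyond what is used for $B$ to be open.

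The only mildly delicate point is the convention that density zero means natural density $\lim_{N\to\infty}|Z\cap[1,N]|/N=0$. With that convention the density of $C\setminus Z$ equals the density of $C$, which is $1/g$.
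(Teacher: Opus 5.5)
Your proof is correct and follows the paper's route. The AC closure $<x_1,\dots,x_n>$ is a full progression contained in the basic set $B=<U>$, and removing the density-zero set leaves a set of positive density, hence an infinite one; you also spell out the step the paper leaves implicit, namely that $d\mid g$ forces $<x_1,\dots,x_n>\subseteq B$.
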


\begin{proof} Openness is immediate. For the second property, $<x_1,\dots,x_n>$ is an arithmetic progression that is contained in $<U>$. Since $<U>\setminus\  U$ has density zero, the set   $<x_1,\dots,x_n>\cap \ U$ has positive density, so it is infinite.
\end{proof}

The following definitions and results were introduced and proved by the author in \cite{LIP}. We refer to \cite{LIP} for the proofs.

\begin{definition} Suppose $U$ is an infinite subset of $\mathbb Z.$ Then $f\colon U \to \mathbb Z$ is a {\it locally integer polynomial} ({\it LIP}, for short) {\it function} if the restriction of $f$ to any finite subset $X$ of $U$ can be given by a polynomial with integer coefficients.  For an infinite subset $U$ of $\mathbb Z,$ we denote by $\text{LIP}(U)$ the set of all  LIP functions on $U.$
\end{definition}

By the following easy lemma, the above condition is equivalent to the interpolation polynomial of $f$ on $X$ having integer coefficients.
 
\begin{lemma} [\cite{LIP}, Lemma 2] Suppose $X=\{x_0,\ldots ,x_d\}\subset \mathbb Z.$ Suppose $f_X(x)\in \mathbb Q[x]$ is a polynomial of degree at most $d,$ and $p(x)\in \mathbb Z[x]$ is such that   $p(x)=f_X(x)$ for all $x\in X.$ Then $f_X(x)\in \mathbb Z[x].$
\end{lemma}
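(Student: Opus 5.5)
We will prove the statement by induction on $d$, using the Newton form of the interpolation polynomial. Write
\[
f_X(x)=\sum_{k=0}^{d} c_k \prod_{j=0}^{k-1}(x-x_j),
\]
where $c_k=[x_0,\dots,x_k]$ is the $k$-th divided difference of the values $f_X(x_i)=p(x_i)$. Since the points are distinct, $f_X$ is the unique polynomial of degree at most $d$ taking these values. Each product $\prod_{j<k}(x-x_j)$ is monic with integer coefficients. Hence it suffices to show that every $c_k$ is an integer, and then $f_X\in\mathbb Z[x]$ follows at once.

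The key step is to compute the divided differences of $p$ directly. By linearity in the function, the divided differences of $p=\sum_m a_m x^m$ are the $\mathbb Z$-combinations $\sum_m a_m [x_0,\dots,x_k](x^m)$. So we only need the monomial case. For monomials there is the classical identity
\[
[x_0,\dots,x_k](x^m)=h_{m-k}(x_0,\dots,x_k),
\]
the complete homogeneous symmetric polynomial of degree $m-k$, taken to be $0$ when $m<k$. Since $h_{m-k}$ has integer coefficients and the $x_i$ are integers, its value is an integer.

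To check the identity, we induct on $k$ via the recursion
\[
[x_0,\dots,x_k]=\frac{[x_1,\dots,x_k]-[x_0,\dots,x_{k-1}]}{x_k-x_0}.
\]
The required fact is
\[
h_r(x_1,\dots,x_k)-h_r(x_0,\dots,x_{k-1})=(x_k-x_0)\,h_{r-1}(x_0,\dots,x_k).
\]
This follows by splitting $h_r(x_0,\dots,x_k)$ according to whether the monomials contain $x_0$, or respectively $x_k$.

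Alternatively, there is a cleaner argument that avoids the symmetric-function identity. Divide $p$ by $\prod_{j=0}^{d}(x-x_j)$, which is monic with integer coefficients, so division with remainder stays in $\mathbb Z[x]$:
\[
p=q\prod_{j=0}^{d}(x-x_j)+r,\qquad q,r\in\mathbb Z[x],\ \deg r\le d.
\]
Then $r$ agrees with $p$, and hence with $f_X$, on all of $X$. Both $r$ and $f_X$ have degree at most $d$ and agree at $d+1$ distinct points, so $r=f_X$ by uniqueness of interpolation. Therefore $f_X\in\mathbb Z[x]$. We will present this division argument as the main proof. The only point needing care is that division by a monic integer polynomial never introduces denominators, which is standard, so we expect no real obstacle.
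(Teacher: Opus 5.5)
Your division argument is correct and complete. Since $\prod_{j=0}^{d}(x-x_j)$ is monic in $\mathbb Z[x]$, the remainder $r$ of $p$ lies in $\mathbb Z[x]$ and has degree at most $d$. It agrees with $f_X$ on the $d+1$ distinct points of $X$, so uniqueness of interpolation gives $r=f_X$.

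The paper does not prove this lemma itself; it quotes it from \cite{LIP} as an ``easy lemma.'' So there is no in-paper proof to compare against. Your remainder argument is the natural proof and needs nothing beyond uniqueness of interpolation.

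Your Newton-form alternative is also valid. The identity $[x_0,\dots,x_k](x^m)=h_{m-k}(x_0,\dots,x_k)$ is correct, and so is the recursion step $h_r(x_1,\dots,x_k)-h_r(x_0,\dots,x_{k-1})=(x_k-x_0)h_{r-1}(x_0,\dots,x_k)$. This route is longer, but it shows more: every divided difference of an integer polynomial at integer nodes is an integer. That ties in with the divided-difference viewpoint the paper uses later, in the Circuit Lemma and the remark that $f$ is LIP iff all its divided differences are integers.
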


For every non-empty Kirch-open $U$ the set $\text{LIP}(U)$ is naturally a ring, and the collection of all  $\text{LIP}(U)$ is a presheaf, a sub-presheaf of the sheaf of all integer-valued functions. Its sheafification gives the following important definition (cf. \cite{LIP}, Definition 6).
 
\begin{definition}
Suppose $U$ is a non-empty Kirch-open subset of $\mathbb N.$ Then a function $f:U\to \mathbb Z$ is called locally LIP if for every $a\in U$ there exists a Kirch-open  $U_a\subseteq U$ containing $a,$ such  $f_{|_{U_a}}\in \text{LIP}(U_a).$ We  denote the ring of locally LIP functions on $U$ by $\mathcal {LIP}(U).$ With the natural restriction maps, we get a sheaf of rings in Kirch topology, that we will call the arithmetic structure sheaf of $\mathbb N.$
\end{definition}

Clearly, every LIP function on $U$ is locally LIP, while the converse is, in general, false. The first goal of this paper is  to prove that for every almost basic set every locally LIP function is, in fact, LIP, as was conjectured in \cite{LIP}. In many respects, this current paper is a sequel to \cite{LIP}. While we will restate the relevant results, with precise references for proofs, it is recommended that the reader gets some familiarity with LIP functions from \cite{LIP} before reading this paper.

The paper is organized as follows. In section 2 we recall some more definitions and basic results from \cite{LIP}. In section 3 we prove that for almost basic sets every locally LIP function is LIP. We also give an example of a connected open set for which some locally LIP functions are not LIP. In section 4 we discuss the  sheaf cohomology  for $\mathcal{LIP}$,  primarily $H^1,$ as well as $\check{\textrm C}$ech cohomology with respect to covers by almost basic sets. In particular, we prove that  $H^1(U,\mathcal{LIP})=0$ for almost basic sets $U$. In section 5 we discuss some future directions and potential applications of our theory.

\section{Preliminary definitions and results}

As in \cite{LIP}, for every finite set $X$, on which $f$ is defined, we denote by $f_X=f_X(x)$ the interpolation polynomial for $f$ on $X,$ with the dummy variable $x$.

From \cite{LIP}, Theorem 1, if $X=\{x_1,x_2,\dots \},$ then every LIP function on $X$ can be uniquely written as 
$$a_0+a_1(x-x_1)+a_2(x-x_1)(x-x_2)+\dots$$
for some integers $a_0,a_1,a_2,\dots .$

\begin{definition} We will call the above formula the product-sum representation of a LIP function $f$ on $X.$
\end{definition}

\begin{remark} \label{Rpowerseries} We think of the product-sum as an arithmetic analog of a power series. This analogy is especially natural if $X=\{a,a+d,a+2d,\dots \}.$ Then the product-sum formula becomes $$\sum \limits_{k=0}^{\infty}a_k\prod \limits_{i=1}^k (x-a-(i-1)d).$$

If we define the ``$d$-derivative''$D: \textrm{LIP}(X) \to \textrm{LIP}(X) $ as $D(f) (x)=\frac{f(x+d)-f(x)}{d},$ then $D(\prod \limits_{i=1}^k (x-a-(i-1)d))=k\cdot \prod \limits_{i=1}^{k-1} (x-a-(i-1)d),$ similar to $\frac{d}{dx} x^k=kx^{k-1}.$
\end{remark}

\begin{remark} With the above in mind, a locally LIP function on a Kirch open  set $U$ is a function that is given in some neighborhood of every point as a convergent product-sum. This is analogous to complex-analytic functions being given locally as convergent power series.
\end{remark}

If $f:U\to \mathbb Z$ is not LIP, then there is some finite $X\subset U$ such that $f_X\notin \mathbb Z[x].$  Following \cite{LIP}, we call minimal such $X$ circuits.

\begin{definition}[\cite{LIP}, Definition 4.] Suppose $f\colon U\to \mathbb Z$ is a function {\bf not} in $\text{LIP}(U).$ A finite subset $X$ of $U$ is called a {\it circuit} for $f$ if $f_X\notin \mathbb Z[x] $ but $f_{X\setminus \{a\}}\in \mathbb Z[x]$ for every $a\in X.$ 
\end{definition}

We have the following easy but useful lemma.

\begin{lemma}[\cite{LIP}, Lemma 7] \label{Circuit Lemma} Suppose $X$ is a circuit for $f.$ Then

{\it (1)}  $f_X(x)=cx^{|X|-1}+\ldots$ with $c\notin \mathbb Z;$

{\it (2)} all elements of $X$ are congruent modulo $d,$ where $d$ is the denominator of $c$ in the reduced form. 
\end{lemma}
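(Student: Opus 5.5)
The idea is to compare the interpolation polynomials of $f$ on $X$ and on sets obtained from $X$ by deleting one point. The circuit hypothesis says all of these smaller polynomials have integer coefficients. Write $X=\{x_0,\dots,x_m\}$ with $m=|X|-1$. For any two distinct points $a,b\in X$, I will use the standard Newton/Lagrange identity
$$f_X(x)=f_{X\setminus\{b\}}(x)+\bigl(f_{X\setminus\{a\}}(x)-f_{X\setminus\{b\}}(x)\bigr)\cdot\frac{x-b}{a-b}.$$
To check it, note that the right side has degree at most $m$. It agrees with $f$ on $X\setminus\{a,b\}$, because both smaller interpolants equal $f$ there. At $x=b$ it equals $f_{X\setminus\{a\}}(b)=f(b)$. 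At $x=a$ it equals $f_{X\setminus\{a\}}(a)$, which is not $f(a)$ in general; so I will rewrite the identity symmetrically to make the value at $a$ correct. The cleanest route is the divided-difference formula: $f_X(x)-f_{X\setminus\{a\}}(x)=c\prod_{y\in X\setminus\{a\}}(x-y)$, where $c$ is the leading coefficient (the top divided difference) of $f_X$.

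For part (1), use the identity $f_X=f_{X\setminus\{a\}}+c\prod_{y\ne a}(x-y)$. The product is monic with integer coefficients, and $f_{X\setminus\{a\}}\in\mathbb Z[x]$ by the circuit hypothesis. So if $c$ were an integer, then $f_X$ would lie in $\mathbb Z[x]$, contradicting $f_X\notin\mathbb Z[x]$. Hence $c\notin\mathbb Z$, and $c$ is the coefficient of $x^{|X|-1}$, as claimed. (If $|X|=1$ then $f_X$ is a constant $f(x_0)\in\mathbb Z$, so $|X|\ge 2$ automatically.)

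For part (2), take distinct $a,b\in X$ and subtract the two identities obtained by removing $a$ and removing $b$:
$$f_{X\setminus\{b\}}-f_{X\setminus\{a\}}=c\,\Bigl(\prod_{y\ne a}(x-y)-\prod_{y\ne b}(x-y)\Bigr)=c\,(b-a)\prod_{y\ne a,b}(x-y).$$
The left side is in $\mathbb Z[x]$. The right side has leading coefficient $c(b-a)$, since the product on the right is monic. Therefore $c(b-a)\in\mathbb Z$. Writing $c=p/d$ in lowest terms, this forces $d\mid b-a$. Since $a,b$ were arbitrary, all elements of $X$ are congruent modulo $d$.

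The only step needing care is justifying $f_X-f_{X\setminus\{a\}}=c\prod_{y\ne a}(x-y)$. The difference has degree at most $m$ and vanishes at the $m$ points of $X\setminus\{a\}$, so it is a constant multiple of that product. Comparing the $x^m$ coefficients, where $f_{X\setminus\{a\}}$ has degree below $m$, shows the constant is $c$. This is routine, so there is no real obstacle; the rest is the integrality bookkeeping above, using Lemma 2 only implicitly through the interpolation viewpoint.
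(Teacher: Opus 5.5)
Your proof is correct and follows essentially the paper's route. Subtracting your two Newton-form identities is exactly the Exchange Formula applied to $X\setminus\{a,b\}$, which is the tool the paper names for this lemma, and integrality of the left side forces $c(a-b)\in\mathbb Z$.

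Two harmless slips: that difference equals $c\,(a-b)\prod_{y\ne a,b}(x-y)$, not $c\,(b-a)\prod_{y\ne a,b}(x-y)$. Also, the ``standard identity'' in your opening paragraph is misstated, since the roles of $f_{X\setminus\{a\}}$ and $f_{X\setminus\{b\}}$ should be swapped, but you never actually use it.
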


\begin{remark} The coefficient of $f_X$  for $x^{|X|-1}$ is known as the divided difference and is used a lot both in pure and in applied mathematics. The above lemma implies that $f\in \text{LIP}(U)$ iff all of its divided differences are in $\mathbb Z.$ But its most interesting and mysterious aspect is the second statement. Nontrivial arithmetic progressions appear to be at the heart of the theory of LIP functions, just like they are at the heart of Kirch topology. 
\end{remark}

The proof of the above lemma uses a formula that relates the interpolation polynomials for $f$ on two sets that are related by swapping a pair of elements. Following \cite{LIP}, we call it the Exchange Formula:

\begin{lemma}[\cite{LIP}, Lemma 6, Exchange Formula] Suppose $X\sqcup \{a,b\}$ is a finite subset of $\mathbb Z,$ and $f$ is a function on it. Then
$$f_{X\sqcup \{a\}}(x)=f_{X\sqcup \{b\}}(x)+c\cdot (a-b)\cdot \prod \limits_{z\in X} (x-z),$$
where $c$ is the coefficient for $x^{|X|+1}$ in $f_{X\sqcup \{a,b\}}(x).$
\end{lemma}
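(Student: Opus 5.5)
Both sides of the Exchange Formula are polynomials of degree at most $|X|+1$. To prove it, I would show that their difference vanishes at $|X|+1$ points and has degree at most $|X|$, so it is identically zero. The cleanest route is through the Newton form of interpolation. Write $P=f_{X\sqcup\{a,b\}}$, a polynomial of degree at most $|X|+1$ with leading coefficient $c$ (the coefficient of $x^{|X|+1}$).

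First, I would establish the standard Newton identity
$$P(x)=f_{X\sqcup\{a\}}(x)+c\,(x-a)\prod_{z\in X}(x-z).$$
The right-hand side has degree at most $|X|+1$. Its coefficient of $x^{|X|+1}$ is $c$, because $f_{X\sqcup\{a\}}$ has degree at most $|X|$. It agrees with $f$ on $X\sqcup\{a\}$, since the product term vanishes there. Hence $P$ minus the right-hand side has degree at most $|X|$ and vanishes at the $|X|+1$ points of $X\sqcup\{a\}$, so it is zero. By the same argument, with the roles of $a$ and $b$ swapped,
$$P(x)=f_{X\sqcup\{b\}}(x)+c\,(x-b)\prod_{z\in X}(x-z).$$

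Subtracting the two expressions for $P$ gives
$$0=f_{X\sqcup\{a\}}(x)-f_{X\sqcup\{b\}}(x)+c\bigl((x-a)-(x-b)\bigr)\prod_{z\in X}(x-z),$$
that is,
$$f_{X\sqcup\{a\}}(x)=f_{X\sqcup\{b\}}(x)+c\,(a-b)\prod_{z\in X}(x-z),$$
which is the claimed formula.

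There is no real obstacle here. The only care needed is the degree bookkeeping in the Newton identity: the coefficient of $x^{|X|+1}$ must match $c$, and the interpolation polynomial on $|X|+1$ points must have degree at most $|X|$. The degenerate case $X=\emptyset$ is covered by the same argument, with the empty product equal to $1$.
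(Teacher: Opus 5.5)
Your proof is correct and complete. Both identities $f_{X\sqcup\{a,b\}}=f_{X\sqcup\{a\}}+c\,(x-a)\prod_{z\in X}(x-z)$ and its analogue with $b$ follow from your degree count. The disjointness in $X\sqcup\{a,b\}$ guarantees that the $|X|+1$ nodes are distinct, so the uniqueness argument applies. Subtracting the two identities then gives the formula with the correct sign, $c(a-b)$.

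The paper itself gives no proof of this lemma; it refers to \cite{LIP} for it. So there is nothing here to compare against, and your Newton-form argument stands on its own as a self-contained proof.
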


The next lemma is well known. It is included to justify the definition after it.

\begin{lemma} \label{Lconsistency} Suppose $U_1, \dots, U_n$ are infinite arithmetic progressions in $\mathbb N$ such that any two of them have non-empty intersection. Then $\bigcap _{i=1}^{n} U_i $ contains an infinite arithmetic progression. 
\end{lemma}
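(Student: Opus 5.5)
This is the Chinese Remainder Theorem for progressions whose moduli need not be coprime, and I will prove it that way. Write $U_i=a_i+d_i\mathbb N_0$. The plan is to show that the system of congruences $x\equiv a_i \pmod{d_i}$, $i=1,\dots,n$, has an integer solution $x_0$. Once we have $x_0$, let $L=\operatorname{lcm}(d_1,\dots,d_n)$ and choose $N\geq \max_i a_i$. The progression $\{x\equiv x_0 \pmod L,\ x\geq N\}$ is an infinite arithmetic progression in $\mathbb N$. Each of its elements is $\geq a_i$ and $\equiv a_i \pmod{d_i}$, so it lies in every $U_i$, and hence in $\bigcap_{i=1}^n U_i$.

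The pairwise hypothesis gives the input. If $y\in U_i\cap U_j$, then $y\equiv a_i \pmod{d_i}$ and $y\equiv a_j\pmod{d_j}$. Hence $a_i\equiv a_j \pmod{\gcd(d_i,d_j)}$ for all $i,j$.

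The main step is the generalized Chinese Remainder Theorem: pairwise compatibility of $x\equiv a_i\pmod{d_i}$ modulo $\gcd(d_i,d_j)$ implies global solvability. I will prove it prime by prime. Fix a prime $p$, and let $p^{e_i}$ be the exact power of $p$ dividing $d_i$. Choose an index $k$ with $e_k$ maximal. For each $i$ we have $\gcd(d_i,d_k)$ divisible by $p^{e_i}$, so $a_i\equiv a_k \pmod{p^{e_i}}$. Setting $b_p=a_k$ therefore satisfies $b_p\equiv a_i\pmod{p^{e_i}}$ for every $i$. The moduli $p^{\max_i e_i}$, for the finitely many primes $p$ dividing $L$, are pairwise coprime, so the classical CRT gives $x_0$ with $x_0\equiv b_p \pmod{p^{\max e_i}}$ for each such $p$. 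Then $x_0\equiv a_i \pmod{p^{e_i}}$ for every $p$ and $i$, which gives $x_0\equiv a_i\pmod{d_i}$.

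Alternatively, one could induct on $n$, intersecting two progressions at a time. That route requires checking that the new progression, with modulus $\operatorname{lcm}(d_1,d_2)$, remains compatible with the remaining ones, which again reduces to the same prime-by-prime check. I expect no real obstacle beyond the compatibility step above. The passage from a residue class to a genuine infinite progression inside $\mathbb N$ is handled by the truncation at $N$. That truncation is needed because the $U_i$ are one-sided rather than full progressions.
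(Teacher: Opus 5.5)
Your proof is correct and takes essentially the same route as the paper. Your prime-by-prime step, choosing the index with maximal $p$-exponent, is the paper's observation that the projections to $\mathbb Z/p^{k}\mathbb Z$ are nested cosets; after that comes the Chinese Remainder Theorem, and your explicit truncation at $N\geq \max_i a_i$ spells out the paper's remark that the solution set contains arbitrarily large numbers.
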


\begin{proof} 
Suppose $U_i=a_i+d_i\mathbb N_0,$ $D=\textrm{lcm} (d_1,\dots ,d_n),$ and $D=p_1^{k_1}\cdot \dots \cdot p_m^{k_m}$ is the prime factorization of $D.$ For every prime $p_j$ dividing $D$ consider the projections of $U_i$ to $\mathbb Z/ p_j^{k_j}\mathbb Z.$  Clearly, any two projections must intersect. From the classification of subgroups of $\mathbb Z/ p_j^{k_j}\mathbb Z$ and their cosets, for every two intersecting projections one must be contained in the other. Thus, for each $p_j$ the intersection of all projections is non-empty. By the Chinese Remainder Theorem, the set of all integers $a$ that are congruent to $a_i$ modulo $d_i$ is non-empty and contains arbitrarily large numbers. Take one such $a$. Then $a+D\mathbb N_0\subseteq \bigcap _{i=1}^{n} U_i .$ 
\end{proof}

\begin{definition} \label{Dstar} A set $U$ is called (almost) star-like if $U=U_1\cup \ldots \cup U_n,$ where all $U_i$ are (almost) basic sets and  $U_i\cap U_j\neq \emptyset$ for every $i$ and $j.$
\end{definition}

\begin{remark} Equivalently, almost star-like sets are obtained from star-like sets by removing density zero closed sets.
\end{remark}

\begin{remark} For an almost star-like set $U=U_1\cup \ldots \cup U_n,$ with $<U_i>=<a_i+d_i\mathbb N_0>$ by Lemma \ref{Lconsistency} we can assume that all $a_i$ are the same. We can also assume that no $d_i$ divides $d_j$.  With this condition, the basic sets $<U_i>$ are uniquely determined  by $U,$ as maximal arithmetic progressions that up to a set of density zero are contained in $U.$ Their intersection is an arithmetic progression with the common difference $D=\textrm{lcm} (d_1,\dots ,d_n),$ which is also a basic set.
\end{remark}

The following useful lemma is a direct consequence of several results of \cite{LIP}.

\begin{lemma} \label{Lstar} Suppose $U=U_1\cup \ldots \cup U_n$ where the $U_i$ are almost basic sets with pairwise non-trivial intersection. Suppose $f:U \to \mathbb Z$ is LIP on each $U_i$. Then $f$ is LIP on $U.$
\end{lemma}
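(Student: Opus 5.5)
Assume $f$ is not LIP on $U$ and derive a contradiction. Choose a circuit $X$ for $f$ of minimal size. Lemma \ref{Circuit Lemma} tells us two things: the leading coefficient $c$ of $f_X$ is not an integer, and if $m>1$ is the reduced denominator of $c$, then all elements of $X$ are congruent modulo $m$. Fix a prime $p$ dividing $m$, so $v_p(c)<0$. Since $f$ is LIP on each $U_i$, the set $X$ cannot lie inside a single $U_i$. The goal is to move $X$ into one $U_i$, one element at a time, using the Exchange Formula, while keeping $p$ in the denominator of the leading divided difference. That final position is impossible, which gives the contradiction.

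The basic move works as follows. Take $a\in X$ and let $Y=X\setminus\{a\}$. For any $b\in U$ we have
$$f_{Y\sqcup\{a\}}=f_{Y\sqcup\{b\}}+c'(a-b)\prod_{z\in Y}(x-z),$$
where $c'$ is the top coefficient of $f_{X\cup\{b\}}$. Comparing top coefficients of degree $|X|-1$ relates the divided difference on $X$ to the divided difference on $Y\cup\{b\}$ and a correction term $c'(a-b)$. We want to pick $b$ so that $v_p$ of the correction is at least $0$, or at least strictly larger than $v_p(c)$. Then the divided difference on $Y\cup\{b\}$ still has $p$ in its denominator.

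Choosing $b$ is where almost-basicness enters. Suppose $Y$ already has at least two points in $U_j$. By the Lemma on almost basic sets, $<Y\cap U_j>\cap U_j$ is infinite, so there are many candidates $b$ in $U_j$ congruent to the elements of $Y$ modulo a high power of $p$. I would use the pairwise intersection hypothesis, through Lemma \ref{Lconsistency} and the remark after Definition \ref{Dstar}, to arrange a common residue $a_i$ for all the $U_i$. The intersection $\bigcap <U_i>$ is then a basic progression with difference $D$ that meets every $U_i$ in a set of positive density. So we may take $b$ in an appropriate $U_j$ satisfying $b\equiv a \pmod{p^N}$ for large $N$ whenever $p\nmid d_j$, and satisfying $b\equiv a\pmod{p}$ when $p\mid d_j$. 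Here we use that $a$ and the progressions share residues modulo $p$, because the elements of $X$ are congruent mod $p$ and each $d_i$ is square-free.

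The main obstacle is controlling $c'$. It is a divided difference on a set one larger than $X$, so its denominator may be worse than that of $c$. My first idea is induction on $|X|$ together with a careful choice of $b$ far away in the $p$-adic sense. Alternatively, one can choose $b$ so that $f_{X\cup\{b\}}$ contains no new circuit primes beyond those forced. If that route fails, I would fall back on citing the corresponding results of \cite{LIP}. The paper says the lemma is a direct consequence of them: the result that LIP on two overlapping basic progressions implies LIP on their union, extended to finitely many by induction. The induction would merge $U_1\cup U_2$ first, note that it is almost star-like, and then add the remaining $U_i$ one at a time.
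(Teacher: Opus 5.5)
Your starting setup (a minimal circuit, a prime $p$ in the denominator, all points congruent mod $p$) is reasonable, but neither of your two routes yields a proof.

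\textbf{The direct route.} It stalls exactly where you say, and it cannot be completed as you set it up, i.e.\ by moving a \emph{prescribed} point $a\in X$. Comparing top coefficients in the Exchange Formula gives $c=c_b+c'(a-b)$, where $c_b$ is the divided difference on $Y\cup\{b\}$. So bounding $c'(a-b)$ is the same problem as bounding $c_b$; there is no independent handle on $c'$. Worse, when the move fails you get $v_p(c')=v_p(c)-v_p(a-b)$, so making $b$ $p$-adically closer to $a$ makes $c'$ worse, not better. For instance, in Theorem \ref{TnonLIP}, replacing $2$ in the circuit $\{2,4\}$ by any $b\in 1+3\mathbb N_0$, however $2$-adically close to $2$, always fails, because $f$ is LIP on $1+3\mathbb N_0$.

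The missing idea is the relation of Remark \ref{DivDiff3}, $c(x_k-x_i)=c_i(e-x_k)-c_k(e-x_i)$. Here $c_i,c_k$ are the divided differences of $X\cup\{e\}$ with $x_i$, respectively $x_k$, removed, so no $(|X|+1)$-point divided difference appears. If $e\equiv x_i\pmod{p^{s}}$ with $s=v_p(x_i-x_k)$, the left side has valuation $v_p(c)+s<s$, so $v_p(c_i)<0$ or $v_p(c_k)<0$. But you do not get to choose which point leaves.

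You must therefore argue by minimality: take a circuit of minimal size with the fewest points outside a target set, as in the proof that $\mathcal{LIP}(X)=\textrm{LIP}(X)$ for nests. A single $U_1$ (chosen to contain a point of $X$) cannot be the final target, because the swap never forces the last outside point $y\in U_j$ in. This is where pairwise intersection is indispensable. You next pull the remaining points into $U_1\cap U_j$. That set is open, nonempty, and up to density zero a progression inside both $<U_1>$ and $<U_j>$. Since the circuit has points in both sets, $U_1\cap U_j$ meets the circuit's residue class mod $p$, so Lemma \ref{PP} supplies $e$. A circuit with at most one point outside $U_1\cap U_j$ then lies in $U_1$ or in $U_j$, a contradiction. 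Your proposal uses the intersections only to adjust residues of $b$, never at this step.

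\textbf{The fallback.} It is not valid as described either. After merging $U_1\cup U_2$, the inductive step must glue the almost star-like set $U_1\cup U_2$ to $U_3$. That is not a pair of basic progressions, and the naive two-set statement fails for such pairs. The function of Theorem \ref{TnonLIP} is LIP on $A=(1+3\mathbb N_0)\cup(1+2\mathbb N_0)$ and on $B=2+3\mathbb N_0$, with $A\cap B=5+6\mathbb N_0\neq\emptyset$, yet it is not LIP on $A\cup B$.

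So each step has to use that $U_{k+1}$ meets \emph{every} earlier $U_i$. That requires a gluing criterion sensitive to all these intersections, not just to $A\cap B\neq\emptyset$. You also leave the passage from basic to almost basic sets unaddressed. The paper's proof is a citation as well: it derives the lemma from Theorem 9 and Lemma 9 of \cite{LIP} in the same way as Corollary 6 there. Your two-progression statement plus induction is not a substitute for that.
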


\begin{proof} This is a simple generalization of Corollary 6 of \cite{LIP} and it follows from Theorem 9 and Lemma 9 of \cite{LIP} in the same way. Incidentally, non-emptiness of the pairwise intersections of $U_i$ implies that the intersection of all $U_i$ is not empty.
\end{proof}

We end this section with a simple lemma that helps to understand the nature of Kirch topology in relation to p-adic topology.

\begin{lemma}[Proximity Principle] \label{PP} Suppose $U$ is a Kirch-open set, $n\in \mathbb N,$ and $p$ is a prime. Suppose there is some $u\in U$ such that $u \equiv n \pmod p.$ Then for every $k\in \mathbb N$ there are infinitely many $u \in U$ such that $u \equiv n \pmod{p^k}.$
\end{lemma}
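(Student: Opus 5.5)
The plan is to reduce to a basic open set around $u$ and then use the Chinese Remainder Theorem to adjust the residue modulo $p^k$ while keeping membership in that basic set. Since $U$ is Kirch-open and $u\in U$ with $u\equiv n \pmod p$, there is a basic set $B=a+d\mathbb N_0$ with $u\in B\subseteq U$, where $\gcd(a,d)=1$ and $d$ is square-free. Replacing $a$ by $u$, we may write $B\supseteq u+d\mathbb N_0$; so it suffices to produce infinitely many elements of $u+d\mathbb N_0$ that are congruent to $n$ modulo $p^k$.

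We split into two cases according to whether $p$ divides $d$.

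\textbf{Case $p\nmid d$.} Then $d$ is invertible modulo $p^k$. The congruence $u+dt\equiv n \pmod{p^k}$ has a solution $t\equiv t_0 \pmod{p^k}$. Every $t=t_0+p^k s$ with $s\in\mathbb N_0$ (taking $t_0\geq 0$) gives an element of $B$ congruent to $n$ modulo $p^k$, and these are infinitely many.

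\textbf{Case $p\mid d$.} Since $d$ is square-free, $d=pd'$ with $\gcd(p,d')=1$, and $p^2\nmid d$. We want $x$ with
\[
x\equiv u \pmod{d'},\qquad x\equiv n \pmod{p^k},\qquad x\geq u.
\]
By CRT these $x$ form a residue class modulo $d'p^k$, so there are infinitely many such $x$ in $\mathbb N$ above $u$. For any such $x$ we have $x\equiv n\equiv u \pmod p$, and $x\equiv u \pmod{d'}$, hence $x\equiv u \pmod{d}$. Together with $x\geq u$, this gives $x\in u+d\mathbb N_0\subseteq U$.

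The only step needing care is the second case: the hypothesis $u\equiv n\pmod p$ is exactly what makes the congruence modulo $p$ compatible. The square-freeness of $d$ ensures that membership in $B$ constrains nothing modulo $p^2$ or higher. Beyond that, the argument is routine, and I expect no real obstacle.
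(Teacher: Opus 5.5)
Your proof is correct and follows essentially the paper's argument. The paper takes a progression $u+d\mathbb N_0\subseteq U$ and solves $u+dx\equiv n \pmod{p^k}$ directly, using that $p$ divides $d$ at most once while $p$ divides $n-u$; your two cases (inverting $d$ when $p\nmid d$, and CRT with $d=pd'$ when $p\mid d$) make exactly this explicit, and they also cover the case $n=u$ and the nonnegativity of the parameter, which the paper glosses over.
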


\begin{proof} Because $U$ is open, we have some $u+d\mathbb N_0\subseteq U,$ where $d$ is square-free. Suppose $n=u+p^i m,$ where $p$ does not divide $m.$ We need to find $x$ that solves the congruence $u + dx \equiv u+p^i m \pmod{p^k}.$ Since $i\geq 1$ and the order of $p$ in $d$ is at most $1$, this is always possible. And once we find one such $x,$ any $x+p^ky$ for integer $y$ will also work.
\end{proof}

\section{Locally LIP vs. LIP }

We start  this section with a lemma that will be later used to ``improve'' circuits.
 
\begin{lemma}[Circuit Improvement Lemma] \label{CircuitImprovement}Suppose $X=\{x_1,\dots,x_n\}$ is a circuit for $f$ and $2\leq m\leq n.$ Suppose $e \in  <x_1,\dots , x_m>$. Then for at least one $i\in \{1,\dots, m\}$ the restriction of $f$ to $X\cup \{e\} \setminus\{x_i\}$ is not in $\mathbb Z [x].$
\end{lemma}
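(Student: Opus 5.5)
The plan is to argue by contradiction, comparing $f_X$ with the interpolation polynomials on the sets $Y_i=X\cup\{e\}\setminus\{x_i\}$ via the Exchange Formula. The only arithmetic input will be that $e$ satisfies every congruence shared by $x_1,\dots,x_m$, which is exactly what $e\in <x_1,\dots,x_m>$ gives.

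First I dispose of the degenerate case $e\in X$. If $e=x_j$ with $j\le m$, then for $i=j$ the set $X\cup\{e\}\setminus\{x_j\}$ is $X$ itself, and $f_X\notin\mathbb Z[x]$ because $X$ is a circuit. If $e=x_j$ with $j>m$, then $X\cup\{e\}=X$, and I would either note that this case is excluded by the intended reading or handle it the same way as below. So assume $e\notin X$. Suppose, for contradiction, that $f_{Y_i}\in\mathbb Z[x]$ for every $i\in\{1,\dots,m\}$.

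Let $c$ be the coefficient of $x^{n}$ in $f_{X\cup\{e\}}$. For each $i\le m$, apply the Exchange Formula with the set $X\setminus\{x_i\}$, $a=x_i$ and $b=e$:
$$f_X(x)=f_{Y_i}(x)+c\,(x_i-e)\prod_{z\in X\setminus\{x_i\}}(x-z).$$
Compare the coefficients of $x^{n-1}$. By Lemma \ref{Circuit Lemma}, the left side gives $c_X\notin\mathbb Z$, the leading coefficient of $f_X$. On the right, $f_{Y_i}$ contributes an integer, since it lies in $\mathbb Z[x]$ and has degree at most $n-1$. Hence
$$c\,(x_i-e)\in c_X+\mathbb Z \qquad\text{for all } i\le m.$$

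Subtracting these relations for two indices gives $c\,(x_i-x_j)\in\mathbb Z$ for all $i,j\le m$. In particular $c\neq 0$, since otherwise $c_X\in\mathbb Z$. Write $c=r/q$ in lowest terms. Then $q$ divides every difference $x_i-x_j$, so $x_1,\dots,x_m$ all lie in the full arithmetic progression $(x_1+q\mathbb Z)\cap\mathbb N$. Because $m\ge 2$ and $e\in <x_1,\dots,x_m>$, the definition of the AC closure gives $e\equiv x_1\pmod q$. Therefore $c\,(x_1-e)\in\mathbb Z$, which contradicts $c\,(x_1-e)\in c_X+\mathbb Z$ with $c_X\notin\mathbb Z$. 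Hence some $f_{Y_i}\notin\mathbb Z[x]$.

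The main point needing care is the coefficient bookkeeping in the Exchange Formula. One must check that the degrees match ($|X\cup\{e\}|=n+1$, so $c$ is the top coefficient) and that $f_{Y_i}$ has degree at most $n-1$, so its $x^{n-1}$ coefficient is an integer. Once that is set up, the passage from "$q$ divides all $x_i-x_j$" to "$q$ divides $x_1-e$" is immediate from the definition of $<\cdot>$.
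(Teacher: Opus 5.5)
Your proof is correct once $e\notin X$ is assumed. It is a streamlined variant of the paper's argument rather than the same computation.

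The paper never introduces the top coefficient of $f_{X\cup\{e\}}$. It proceeds as follows:
\begin{itemize}
\item it writes each $f_{Y_i}$ in Newton form over $X\setminus\{x_i\}$, with integer leading coefficient $c_i$;
\item it evaluates at $e$ and uses the Exchange Formula for the swap $x_i\leftrightarrow x_k$ to express $f_{X\setminus\{x_i\}}(e)-f_{X\setminus\{x_k\}}(e)$;
\item it cancels $\prod_{j\neq i,k}(e-x_j)$, obtaining the three-term relation of Remark \ref{DivDiff3} linking $c_X$, $c_i$ and $c_k$;
\item a B\'ezout combination of the differences $x_i-x_k$ then gives $c_X d\in d\mathbb Z$ with $d=\gcd(x_i-x_k)$.
\end{itemize}

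You apply the Exchange Formula to the swap $x_i\leftrightarrow e$ instead. A single comparison of $x^{n-1}$-coefficients then gives $c_X=c_i+c\,(x_i-e)$, where your $c$ is the top coefficient of $f_{X\cup\{e\}}$. The paper's relation is what one gets, up to sign, by eliminating $c$ between two of these equations. The arithmetic then enters through the reduced denominator of $c$ and the definition of the AC closure, not through B\'ezout.

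Your route is shorter and avoids the evaluation-and-cancellation step. It also shows exactly where $e\notin X$ is used, namely disjointness in the Exchange Formula. The paper's route produces an identity with no $(n+1)$-point coefficient in it, and that form is what it reuses later (Remark \ref{DivDiff3} and the proof of the theorem on nests).

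Drop the hedge in your degenerate case. If $e=x_j$ with $j>m$, every replacement set is just $X\setminus\{x_i\}$, on which $f$ is integral. So the statement is false there and cannot be ``handled the same way.'' For example, $f(1)=0$, $f(3)=2$, $f(5)=0$ makes $X=\{1,3,5\}$ a circuit, and $5$ lies in the AC closure of $\{1,3\}$. Yet with $m=2$ and $e=5$, both replacements $\{3,5\}$ and $\{1,5\}$ carry integral interpolation polynomials.

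The lemma has to be read with $e\notin X$. The paper assumes this tacitly (it cancels $\prod_{j\neq i,k}(e-x_j)$), and its application permits it, since $e$ is chosen from a set of positive density. Make that assumption at the outset. Then the two readings $(X\cup\{e\})\setminus\{x_i\}$ and $(X\setminus\{x_i\})\cup\{e\}$ coincide, and your case analysis becomes unnecessary.
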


\begin{proof} Suppose the highest degree coefficient for $f_X$ is $c.$ From Lemma \ref{Circuit Lemma}, $c\notin \mathbb Z.$ For every $i$ denote $f_i(x)=f_{X\setminus \{x_i\}} \in \mathbb Z[x].$  Then 
$$f_{\{e\}\cup X\setminus \{x_i\}}(x)=f_i(x)+c_i\prod \limits_{j\neq i} (x-x_j).$$

If for all $i$ this polynomial has integer coefficients, then all $c_i\in \mathbb Z.$ Consider arbitrary $i$ and $k$ from $1$ to $m.$ Plugging in $x=e$ into the above formulas for $i$ and $k$ and subtracting, we get 
$$0=f_i(e)+c_i\prod \limits_{j\neq i} (e-x_j)-(f_k(e)+c_k\prod \limits_{j\neq k} (e-x_j)),$$
$$0=(f_i(e)-f_k(e))+[c_i(e-x_k)-c_k(e-x_i)]\prod \limits_{j\neq i,k} (e-x_j).$$

On the other hand, from the Exchange Formula, we get
$$f_i(e)-f_k(e)=c(x_i-x_k)\prod \limits_{j\neq i,k} (e-x_j).$$
Plugging this into the previous formula and canceling $\prod \limits_{j\neq i,k} (e-x_j),$ we get
$$c(x_k-x_i)=c_i(e-x_k)-c_k(e-x_i).$$
Suppose $d$ is the difference of $<x_1,\dots,x_m>,$ which is the greatest common divisor of all $x_i-x_k.$  Then $e-x_i$ and $e-x_k$ are multiples of $d.$ Taking the appropriate integer linear combination of the equations above, we can get $(cd)$ on the left and some integer multiple of $d$ on the right. But this implies $c\in \mathbb Z,$ a contradiction.
\end{proof}

\begin{remark} \label{DivDiff3}The relation $c(x_k-x_i)=c_i(e-x_k)-c_k(e-x_i)$ is part of the general theory of divided differences. Its importance far exceeds its use in the above lemma. We will use it again later. 
\end{remark}

\begin{theorem} \label{Tmain+} Every locally LIP function on an almost basic set is LIP.
\end{theorem}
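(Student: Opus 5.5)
I will argue by contradiction. Let $U$ be almost basic and $f\in\mathcal{LIP}(U)$, and suppose $f\notin\text{LIP}(U)$. Then $f$ has circuits. Among all circuits I will choose one that is extremal in a suitable sense, and then use the Circuit Improvement Lemma (Lemma \ref{CircuitImprovement}) to produce a circuit that beats it, contradicting the choice.

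\textbf{Step 1: measuring the spread of a circuit.} For a circuit $X=\{x_1,\dots,x_n\}$, Lemma \ref{Circuit Lemma} says all elements of $X$ are congruent modulo the denominator $q$ of the leading coefficient $c$. So the AC closure $<X>$ is a full progression whose difference $\delta$ is a multiple of $q>1$. I will minimize the $p$-adic valuation of $\delta$ at a fixed prime $p\mid q$. Since $U$ is open, every point $a\in U$ has a Kirch neighborhood $U_a\subseteq U$ on which $f$ is LIP. We may take $U_a$ to be a basic set $a+d_a\mathbb N_0$ with $d_a$ square-free, with finitely many density-zero points removed as needed.

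\textbf{Step 2: shrinking the circuit into one neighborhood.} Fix a point $x_1\in X$ and its neighborhood $U_{x_1}=x_1+d\mathbb N_0$, with $d$ square-free. I will replace the other elements of $X$ one at a time by elements of $U_{x_1}$. At each stage, choose $e\in<x_1,x_j>\cap\, U\cap U_{x_1}$ for the next element $x_j$ not yet in $U_{x_1}$. To see such $e$ exists, note that $<x_1,x_j>$ is an arithmetic progression contained in $<U>$, and $U_{x_1}$ is a progression through $x_1$. Their intersection is therefore a progression, and by a density argument as in the first lemma it still meets $U$ infinitely often. The Proximity Principle (Lemma \ref{PP}) supplies the $p$-adic control: we can pick $e$ congruent to $x_1$ modulo high powers of primes, so that the difference of the new AC closure does not grow.

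Lemma \ref{CircuitImprovement}, applied with $m=2$ to $x_1,x_j$, gives an index $i\in\{1,j\}$ such that $f$ is not integer-polynomial on $X\cup\{e\}\setminus\{x_i\}$. This set contains a circuit. I must check that the new circuit has at least as many points in $U_{x_1}$ and still satisfies the minimality condition. This uses the fact that divided differences over subsets relate via the relation in Remark \ref{DivDiff3}, so the non-integrality persists with denominator divisible by the same $p$.

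\textbf{Step 3: conclusion.} Iterating finitely many times yields a circuit contained entirely in $U_{x_1}$. But $f$ is LIP on $U_{x_1}$, so it has no circuits there, which is a contradiction. The case where $i=1$ is the index that must be dropped is handled by re-basing at $e$, whose neighborhood we again choose.

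\textbf{Main obstacle.} The hard part is making this process terminate. Dropping $x_1$ changes the base point and the neighborhood, so naive induction on ``number of points in $U_{x_1}$'' can cycle. I would first handle this with a lexicographic potential: the $p$-adic valuation of the difference of $<X>$ together with $|X|$. The hope is to show, using the Proximity Principle, that one can always choose $e$ so that whichever $x_i$ is removed, the new $<X'>$ has difference with strictly larger $p$-adic valuation or equal valuation and the same size. The valuation cannot increase forever while $q$ stays fixed, because the denominator must divide the difference. If this fails, the fallback is to use that $U$ is almost basic: cover by finitely many neighborhoods on a progression via compactness-like reasoning modulo $p^k$, and apply Lemma \ref{Lstar} to glue.
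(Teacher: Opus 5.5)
\textbf{There is a genuine gap: the iteration in Step 3 cannot terminate, and the fix requires Lemma \ref{Lstar} as the core of the argument, not as a fallback.}

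\emph{Why Step 3 is unreachable.} Because $f$ is LIP on $U_{x_1}$, no circuit is contained in $U_{x_1}$. Lemma \ref{CircuitImprovement} gives you no control over which point is discarded. Worse, the constraint points the wrong way exactly when it matters. Suppose all points of the circuit except $x_n$ lie in $U_{x_1}$, and you take $e\in U_{x_1}$ in the AC closure of $x_1$ and $x_n$. Discarding $x_n$ would give a set inside $U_{x_1}$, where $f$ is integer-polynomial. So the lemma is forced to discard $x_1$ and keep $x_n$, and you are back to exactly one point outside the base neighborhood. For $n=2$ this happens at the very first move: $\{x_1,e\}$ cannot be a circuit, so you always get $\{e,x_2\}$, and re-basing at $e$ reproduces the same configuration indefinitely.

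\emph{The proposed potential does not help.} The condition $q\mid\delta$ gives a lower bound $v_p(\delta)\ge v_p(q)$, not an upper bound, so it gives no termination. More fundamentally, the $p$-adic depth of the AC closure is the wrong invariant. Kirch neighborhoods have square-free moduli, so $U_{x_i}\cap U_{x_j}\neq\emptyset$ iff $x_i\equiv x_j$ modulo every prime dividing $\gcd(d_{x_i},d_{x_j})$. That condition involves many primes other than $p$, and only their first powers.

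\emph{The missing idea.} The contradiction must come from gluing several neighborhoods at once via Lemma \ref{Lstar}. The target is a circuit whose LIP-neighborhoods pairwise intersect, not one lying inside a single neighborhood. The paper reaches this by induction on the circuit size $n$, so that each set produced by Lemma \ref{CircuitImprovement} is again a circuit of size $n$, followed by $n-1$ replacement steps.
\begin{itemize}
\item At each step, $e$ is taken in the AC closure of \emph{all} remaining original points. The lemma then applies with $m$ equal to their number, and it does not matter which of them is discarded.
\item In addition, $e$ must be divisible by every prime dividing some $d_{x_i}$ or some difference $x_i-x_j$ of current points, except the primes dividing all differences of the remaining original points.
\item Since $\gcd(e,d_e)=1$, every prime dividing both $d_e$ and $d_{x_i}$ is one of these exceptional primes. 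Modulo such a prime, $e$ is congruent to the original points, and to any earlier new point whose modulus it divides (by that point's own divisibility condition). Hence $U_e$ meets every other neighborhood.
\item After $n-1$ steps all neighborhoods pairwise intersect. Lemma \ref{Lstar} then makes $f$ LIP on their union, contradicting the circuit.
\end{itemize}
The base case $n=2$ is handled directly. Choose $e$ in the AC closure of $x_1,x_2$ with $U_e$ meeting both $U_{x_1}$ and $U_{x_2}$. Apply Lemma \ref{Lstar} to $U_{x_1}\cup U_e$ and to $U_{x_2}\cup U_e$, and conclude $x_2-x_1\mid f(x_1)-f(x_2)$, which contradicts $\{x_1,x_2\}$ being a circuit.
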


\begin{proof} Suppose $U=a+d\mathbb N_0\setminus S,$ where $\gcd (a,d)=1,$ $d$ is square-free, and $S$ has density zero. We will prove by induction on $n\geq 2$ that no locally LIP $f:U\to \mathbb Z$ can have a circuit of size $n.$ Because $f$ is locally LIP, for every $v\in U$ there is a neighborhood $U_v$ containing $v$ on which $f$ is LIP. We can take $U_v$ to be $v+d_v\mathbb N_0.$ (For $v=x_i,$ we shorten the notation to $d_i$ instead of $d_{x_i}$).

First, we prove the base: $n=2.$ Suppose $f$ is a counterexample, and $\{x_1,x_2\}$ with $x_1<x_2$ is a circuit for $f.$ We know that $x_1-x_2$ does not divide $f(x_1)-f(x_2).$ 
The arithmetic progression generated by $x_1$ and $x_2$ is proper and is, except for a set of density zero, contained in $U.$ Suppose $U_{x_1}=x_1+d_1\mathbb N_0,$ and  $U_{x_2}=x_2+d_2\mathbb N_0,$ where $d|d_1$ and $d|d_2.$ Denote by $P$ the product of all primes $p$ dividing $d_1d_2$ that do not divide $x_2-x_1.$ The intersection of the arithmetic progressions $0+P \mathbb N_0$ and $x_1+(x_2-x_1)\mathbb N_0$ is not empty, thus an arithmetic progression, thus has positive density. So we can find $e$ in it that lies in $U.$

Suppose $U_e=e+d_e\mathbb N_0.$ To prove that $U_e$ intersects with $U_{x_1}$ we need to show that $\gcd (d_1,d_e)$ divides $e-x_1.$ Suppose $p$ divides $d_1$ and $d_e.$ Since $\gcd(d_e,e)=1,$ $p$ does not divide $e.$ So $p$ divides $x_2-x_1,$ thus it divides $e-x_1.$ By Lemma \ref{Lstar}, the function $f$ is LIP on $U_{x_1}\cup U_e.$ Similarly, it is LIP on  $U_{x_2}\cup U_e.$ So $f(x_1)-f(e)$ is a multiple of $x_1-e,$ thus a multiple of $x_1-x_2.$ The same is true, replacing $x_1$ by $x_2,$ so, subtracting, we get that $f(x_1)-f(x_2)$ is a multiple of $x_1-x_2,$ a contradiction. 

To prove the step of the induction, suppose that $f:U \to \mathbb Z$ has no circuits of size less than $n$ and has a circuit of size $n$: $\{x_1,\dots, x_n\}.$ For each $x_i$ we have a basic set $U_{x_i}$ on which $f$ is LIP. If all pairs of these $U_{x_i}$ have nonempty intersections, then by Lemma \ref{Lstar} the function $f$ is LIP on their union, a contradiction. We will use Lemma \ref{CircuitImprovement} repeatedly to replace the original circuit with a circuit with the above property. Note that, since $f$ has no circuits of size less than $n,$ the set $X\cup \{e\} \setminus\{x_i\}$ in Lemma \ref{CircuitImprovement} is a circuit for $f.$

We proceed in $n-1$ steps. At each step we replace one of the existing $x_i$ (to be called the old  points) by a new one. At the beginning of $k$-th step we have $n+1-k$ old points and $k-1$ new ones. We find $e$ in the AC-closure of old points (which, up to a set of density zero, lies inside $U$), that additionally satisfies the divisibility condition below. Then we use Lemma \ref{CircuitImprovement} to replace one of the old points by this $e$. We will prove that after $n-1$ steps any two elements of the circuit have intersecting neighborhoods on which $f$ is LIP.

{\it Divisibility Condition.} We choose $e$ divisible by all primes that divide $some$ $x_i-x_j$ or $d_i$, for all points, old and new, except for those $p$ that divide $all$ pairwise differences of old points.

We say that two points with their corresponding neighborhoods are connected if these neighborhoods intersect. We will show that at every step of our replacement procedure all new points are connected to all other points, new or old. Clearly, it is enough to prove it for the newly chosen point $e$. For the point $e$ with the neighborhood $e+d_e\mathbb N_0$ to be connected to $x_i,$ it is sufficient (and necessary) that $e-x_i$ is divisible by $gcd(d_e,d_i)$. Recall that $gcd(e,d_e)=1$ and $e$ is by construction divisible by all primes that divide $d_i,$ except those primes $p$ that divide all differences between the old points at that step. So we only need to be concerned with these primes $p$. If $x_i$ is an old point, then, since $e$ is in the AC-closure of the old points, $p$ divides $e-x_i.$ If $x_i$ is a previously created new point, then $p|x_i$ by the divisibility condition for $x_i,$ unless all old points when $x_i$ was created  were congruent modulo $p.$ So $p$ does not divide $d_i$ or $p$ divides $e-x_i$.   
\end{proof}

\begin{remark} The above theorem implies that every almost basic set is connected, but this can be also proved much easier along the same lines as the connectedness of basic sets.
\end{remark}

\begin{theorem} \label{Infstar} Suppose $U$ is an open set and $f$ is a locally LIP function on $U.$ Suppose $a\in U.$ Denote by $U_a$ the union of all almost basic subsets of $U$ that contain $a.$ Then $f$ is LIP on $U_a.$ 
\end{theorem}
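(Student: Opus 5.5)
Since being LIP is a condition on finite subsets, it suffices to take an arbitrary finite set $X=\{x_1,\dots,x_n\}\subseteq U_a$ and show that $f_X\in\mathbb Z[x]$.

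Each $x_i$ lies in some almost basic set $V_i\subseteq U$ with $a\in V_i$. By Theorem~\ref{Tmain+}, the restriction of $f$ to each $V_i$ is locally LIP, hence LIP on $V_i$. Any two of the $V_i$ intersect, since all of them contain $a$. So the union $V=V_1\cup\dots\cup V_n$, taken together with $V_0$ chosen as any one almost basic set containing $a$, is an almost star-like set. Lemma~\ref{Lstar} then gives that $f$ is LIP on $V$. Since $X\subseteq V$, we get $f_X\in\mathbb Z[x]$. As $X$ was arbitrary, $f\in\text{LIP}(U_a)$.

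Two minor points need checking. The first is that $U_a$ is nonempty and infinite, so that LIP makes sense there. This holds because $a$ has a basic neighborhood inside $U$, and a basic set is almost basic. The second is that Lemma~\ref{Lstar} applies to finitely many almost basic sets with pairwise nonempty intersections, which is exactly our situation.

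The main obstacle is already absorbed by Theorem~\ref{Tmain+}. The only genuine content is that locally LIP implies LIP on each almost basic piece, and the common point $a$ supplies the pairwise intersections required by Lemma~\ref{Lstar}. I do not expect any further difficulty beyond verifying that Lemma~\ref{Lstar} allows its sets to be arbitrary almost basic sets, rather than sets of a normalized form.
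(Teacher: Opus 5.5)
Your proposal is correct and is essentially the paper's argument. The paper argues by contradiction: a circuit for $f$ in $U_a$ is a finite set, so it lies in a finite almost star-like union of almost basic sets through $a$, and Theorem~\ref{Tmain+} together with Lemma~\ref{Lstar} makes $f$ LIP there. Your direct version over arbitrary finite $X\subseteq U_a$ is the same reasoning; the extra set $V_0$ is harmless but unnecessary.
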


\begin{proof} Suppose $f$ is not LIP on $U_a.$ Then it has a circuit on $U_a,$ which is a finite set. So there is a star-like subset $V$ of $U_a$ on which $f$ is not LIP. But this is impossible by Theorem \ref{Tmain+} and Lemma \ref{Lstar}.
\end{proof}

Our next goal is to exhibit a connected Kirch open set on which not every locally LIP function is LIP. Specifically, we consider $X=\mathbb N \setminus 6\mathbb N.$ It is a union of three basic Kirch open sets: $1+3\mathbb N_0,$   $1+2\mathbb N_0,$ and  $2+3\mathbb N_0.$ 

We will denote the second set by $V, $ the complement of the second set in the first by $U$, and  the complement of the second set in the third by $W$. Note that $V$ is open, but $U$ and $W$ are not open. Note also that every locally LIP function on $X$ is LIP on $U\cup V$ and on $V \cup W.$ We will also order $U,$ $V,$ and $W$ in increasing order and denote by $U_n$ (resp. $V_n$ and $W_n$) the finite sets consisting of the first $n$ elements of $U$ (resp. $V$ and $W$).

The following definition and notation will help keep our formulas manageable.
\begin{definition} Suppose $A\subset \mathbb N$ is finite, and $x$ is an independent variable. Then the split of $A,$ denoted by $[A](x)$ (or simply $[A]$ if $x$ is assumed) is the monic split polynomial in $\mathbb Z[x]$ whose roots are elements of $A$. That is, $[A](x)= \prod \limits_{a\in A} (x-a).$  Naturally, $[\emptyset]=1.$
\end{definition}

Before proving our main result, we need some lemmas. The first lemma is not strictly necessary, but it elucidates the logic of the proofs of the subsequent ones.

\begin{lemma} Suppose $f(x)$ and $g(x)$ are in $\mathbb Z[x],$ and they take the same values on $V_m.$ Suppose $Q: U_i\cup V_m \cup W_j \to \mathbb Z$ takes the same values as $f$ on $U_i \cup V_m$ and as $g$ on $V_m\cup W_j.$ Suppose $k\in \mathbb N_0.$ Then  $2^k Q$ is LIP on $U_i\cup V_m \cup W_j$ if and only if $2^k(f(x)-g(x))$ belongs to the ideal generated by $[U_i][V_m]$ and $[V_m][W_j].$
\end{lemma}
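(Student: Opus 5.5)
The plan is to work directly with the definition: on a finite set, being LIP just means agreeing with a single polynomial in $\mathbb Z[x]$. Write $Y=U_i\cup V_m\cup W_j$. The three pieces are pairwise disjoint, since $U,V,W$ are. Then $2^kQ$ is LIP on $Y$ if and only if there is some $h\in\mathbb Z[x]$ with $h=2^kf$ on $U_i\cup V_m$ and $h=2^kg$ on $V_m\cup W_j$. So the statement reduces to an identity about integer polynomials vanishing on prescribed finite sets.

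The key auxiliary fact is the following. If $p\in\mathbb Z[x]$ vanishes on a finite set $A\subset\mathbb Z$, then $p$ is divisible by $[A]$ in $\mathbb Z[x]$, not just in $\mathbb Q[x]$. The argument is short. Vanishing at the distinct roots of $[A]$ gives divisibility over $\mathbb Q$. Since $[A]$ is monic with integer coefficients, long division of $p$ by $[A]$ takes place entirely in $\mathbb Z[x]$. By uniqueness of the quotient and remainder, the $\mathbb Q$-quotient is therefore integral. This is the only point requiring any care, and it is routine.

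For the ``only if'' direction, suppose such an $h$ exists. Then $h-2^kf$ vanishes on $U_i\cup V_m$, so $h=2^kf+A\,[U_i][V_m]$ for some $A\in\mathbb Z[x]$. Here I use $[U_i\cup V_m]=[U_i][V_m]$, which holds by disjointness. Similarly $h=2^kg+B\,[V_m][W_j]$ for some $B\in\mathbb Z[x]$. Subtracting the two expressions gives
\[
2^k(f-g)=B\,[V_m][W_j]-A\,[U_i][V_m],
\]
which lies in the ideal generated by $[U_i][V_m]$ and $[V_m][W_j]$.

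For the ``if'' direction, suppose $2^k(f-g)=A\,[U_i][V_m]+B\,[V_m][W_j]$ with $A,B\in\mathbb Z[x]$. Define $h=2^kf-A\,[U_i][V_m]$, which also equals $2^kg+B\,[V_m][W_j]$. This $h$ lies in $\mathbb Z[x]$. From the first expression it agrees with $2^kf$ on $U_i\cup V_m$, and from the second it agrees with $2^kg$ on $V_m\cup W_j$. Hence $h$ agrees with $2^kQ$ on all of $Y$, so $2^kQ$ is LIP there.

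The hypothesis that $f=g$ on $V_m$ is used only to make $Q$ well defined. The degenerate cases $i=0$ or $j=0$, where the corresponding split is $1$, go through verbatim. I expect no real obstacle beyond the integrality of division by monic polynomials.
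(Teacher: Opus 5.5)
Your proof is correct and follows the paper's argument: in both directions you write the integer polynomial representing $2^kQ$ as $2^kf$ minus a multiple of $[U_i][V_m]$ and as $2^kg$ plus a multiple of $[V_m][W_j]$, then subtract. You also make explicit two facts the paper uses silently: $U_i$, $V_m$, $W_j$ are pairwise disjoint, so $[U_i\cup V_m]=[U_i][V_m]$; and an integer polynomial vanishing on a finite set $A$ is divisible by the monic polynomial $[A]$ in $\mathbb Z[x]$.
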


\begin{proof}  If $2^k(f(x)-g(x))\in ([U_i][V_m],[V_m][W_j]),$ then there are $\mu(x)$ and $\tau(x)$ in $\mathbb Z[x]$ such that $2^k(f-g)=[V_m](\mu [U_i]-\tau [W_j]).$ Then $2^kQ$ is given on $U_i\cup V_m \cup W_j$ by an integer polynomial 
$$2^k f(x)-\mu(x)[U_i](x)[V_m](x) =  2^k  g(x)-\tau(x)[V_m](x)[W_j](x). $$
In the other direction, suppose $2^kQ$ is given on $U_i\cup V_m \cup W_j$ by an integer polynomial $R(x).$ Then $2^kf(x)-R(x)$ is zero on $U_i\cup V_m,$ so it equals $\mu(x)[U_i](x)[V_m](x)$ for some $\mu(x)\in \mathbb Z[x].$  Likewise, $2^kg(x)-R(x)=\tau(x)[V_m][W_j](x)(x)$ for some $\tau(x)\in \mathbb Z[x].$ Subtracting, we get $2^k(f-g)=[V_m](\mu [U_i]-\tau [W_j]),$ so $2^k(f(x)-g(x))\in ([U_i][V_m],[V_m][W_j]).$
\end{proof}

\begin{lemma} Suppose $f(x)$ and $g(x)$ are integer polynomials that take the same values on $V_m$. Denote $h(x)=f(x)-g(x) = p(x) \cdot [V_m](x). $ Suppose that $2^kp(x)$ belongs to the ideal $([U_i],[W_j])$ of $\mathbb Z [x].$ Then there exist polynomials $\tilde{f}$ and $\tilde{g}$  in $\mathbb Z[x]$ such that

1) $\tilde{f} \equiv f \pmod {[U_{i}][V_m]}$ (i.e., $\tilde{f}$ and $f$ take the same values on $U_i \cup V_m$);

2) $\tilde{g} \equiv g \pmod {[V_m][W_{j}]}$;

3) $\tilde{f} \equiv \tilde{g} \pmod {[V_{m+1}]}$;

4) if $\tilde{h}=  \tilde{f}- \tilde{g} =\tilde{p}\cdot [V_{m+1}],$ then $2^k\tilde{p}\in ([U_i],[W_j]).$  
\end{lemma}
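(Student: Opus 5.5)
The plan is to find $\tilde f$ and $\tilde g$ by correcting $f$ and $g$ with suitable multiples of $[U_i][V_m]$ and $[V_m][W_j]$. Concretely, write $v=v_{m+1}$ for the $(m+1)$-st element of $V$, and put
$$\tilde f=f-\mu\,[U_i][V_m],\qquad \tilde g=g-\tau\,[V_m][W_j]$$
with $\mu,\tau\in\mathbb Z[x]$ to be chosen. Conditions 1) and 2) then hold automatically. We get $\tilde h=[V_m]\,p'$ with
$$p'=p-\mu[U_i]+\tau[W_j].$$
Condition 3) amounts to $p'(v)=0$, so that $p'=(x-v)\tilde p$. Condition 4) asks that $2^k\tilde p\in([U_i],[W_j])$.

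First I will achieve $p'(v)=0$ with integer constants $\mu,\tau$. This means solving
$$\mu[U_i](v)-\tau[W_j](v)=p(v).$$
The set $U$ consists of even numbers ($\equiv 4 \pmod 6$) and $W$ consists of numbers $\equiv 2\pmod 6$, while $v$ is odd. Hence $[U_i](v)$ and $[W_j](v)$ are odd, and so is $g_0=\gcd([U_i](v),[W_j](v))$. Evaluating $2^kp=\alpha[U_i]+\beta[W_j]$ at $v$ shows $g_0\mid 2^kp(v)$, hence $g_0\mid p(v)$, and the equation is solvable. Moreover, the solutions are determined only up to $(\mu,\tau)\mapsto(\mu+sb,\tau+sa)$, where $a=[U_i](v)/g_0$ and $b=[W_j](v)/g_0$. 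I will keep this freedom, and also the freedom to let $\mu,\tau$ be non-constant, for the last step.

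Next, $2^kp'=A[U_i]+B[W_j]$ with $A=\alpha-2^k\mu$ and $B=\beta+2^k\tau$. Evaluating at $v$ gives $A(v)[U_i](v)+B(v)[W_j](v)=0$, so $A(v)=tb$ and $B(v)=-ta$ for some integer $t$. Rewrite
$$2^kp'=(A-tb)[U_i]+(B+ta)[W_j]+t\bigl(b[U_i]-a[W_j]\bigr).$$
The first two coefficients vanish at $v$, so after dividing by $x-v$ those terms lie in $([U_i],[W_j])$. The remaining issue is the term
$$t\,\frac{b[U_i]-a[W_j]}{x-v}.$$

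I expect this last term to be the main obstacle. The shift $(\mu,\tau)\mapsto(\mu+sb,\tau+sa)$ changes $t$ by $-2^ks$, so with constant corrections $t$ can only be killed modulo $2^k$. I would first try to exploit the non-constant freedom in $\mu,\tau$, replacing $s$ by a polynomial $s(x)$, to cancel this term outright. Failing that, I would show directly that
$$\frac{b[U_i]-a[W_j]}{x-v}$$
lies in $([U_i],[W_j])$ up to a power of $2$ already absorbed by the $2^k$. For that I would use that $a$ and $b$ are coprime and odd, and that the only obstruction to an element of $([U_i],[W_j])$ being divisible in the ideal by $x-v$ comes from primes dividing the values $[U_i](v)$ and $[W_j](v)$, all of which are odd.
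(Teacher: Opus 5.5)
Your setup is the paper's own. You use the same correcting multiples of $[U_i][V_m]$ and $[V_m][W_j]$, the same solvability argument via the odd $g_0=\gcd([U_i](v),[W_j](v))$, the same residual freedom $(\mu,\tau)\mapsto(\mu+sb,\tau+sa)$, and the same reduction of 4) to the leftover term $t\,q$ with $q=\frac{b[U_i]-a[W_j]}{x-v}$. But your proposal stops exactly at the step that carries the proof, and neither of your two ways out works.

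\emph{Non-constant corrections.} Non-constant $\mu,\tau$ buy nothing, because $t$ is determined by $A(v)=\alpha(v)-2^k\mu(v)$. For a fixed representation $2^kp=\alpha[U_i]+\beta[W_j]$, the residue $t\equiv b^{-1}\alpha(v)\pmod{2^k}$ is therefore invariant. So $t$ cannot be killed unless $2^k\mid t$ already.

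\emph{Showing $q$ lies in the ideal up to a power of $2$.} This is false in general. Since $g_0q\in([U_i],[W_j])$ (see below) and $g_0$ is odd, $2^Nq\in([U_i],[W_j])$ would force $q\in([U_i],[W_j])$. That fails, for example, for $U_3=\{4,10,16\}$, $W_3=\{2,8,14\}$, $v=9$. Here $g_0=35$, $a=1$, $b=-1$, and the ideal reduces modulo $7$ to $(x-2)\subset\mathbb F_7[x]$, yet $q(2)=-32\not\equiv 0\pmod 7$. Your heuristic is backwards: the odd primes dividing $g_0$ are precisely the obstruction.

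The missing idea is to spend the oddness of $g_0$ on $t$, not on $q$. Because $\gcd(g_0,2^k)=1$, pick $s\in\mathbb Z$ with $2^ks\equiv t\pmod{g_0}$. After the shift $(\mu,\tau)\mapsto(\mu+sb,\tau+sa)$, which preserves $p'(v)=0$, the new $t'=t-2^ks$ is divisible by $g_0$. Then
\begin{align*}
t'q&=\frac{t'}{g_0}\cdot\frac{[W_j](v)[U_i](x)-[U_i](v)[W_j](x)}{x-v}\\
&=\frac{t'}{g_0}\Bigl(\frac{[W_j](v)-[W_j](x)}{x-v}\,[U_i](x)-\frac{[U_i](v)-[U_i](x)}{x-v}\,[W_j](x)\Bigr)\in([U_i],[W_j]),
\end{align*}
which completes 4). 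This is exactly the paper's step of choosing $e$ with $d\mid e'+2^ke$ before invoking the same difference-quotient identity.
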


\begin{proof}
Denote $c=v_{n+1}.$ We will construct $\tilde{f}$ and $\tilde{g}$ as $f+a[U_i][V_m]$ and $g+b[V_m][W_j]$ for some integer constants $a$ and $b$ to be determined later. Clearly, the first two conditions will be satisfied. The third one is equivalent to $\tilde{f}-\tilde{g}$ being a multiple of $[V_{m+1}](x)=[V_m](x) \cdot (x-c).$ This is, clearly, equivalent to the numerical identity $f(c)-g(c)+a[U_i](c)-b[W_j](c)=0.$

First, we show that the above equation on $a$ and $b$ is always solvable. Note that $2^kp(x)\in ([U_i](x),[W_j](x)),$ so $2^kp(c)\in ([U_i](c),[W_j](c)).$  So $2^kp(c)$ is a multiple of $d=\gcd([U_i](c),[W_j](c)).$ But $[U_i](c)$ and  $[W_j](c)$ are odd, from the definition of the sets $U,$ $V,$ and $W$. Therefore $p(c)$ is a multiple of $d,$ so $f(c)-g(c)=p(c)=a[U_i](c)-b[W_j](c)$ for some integers $a$ and $b$. Additionally, we note that the choice of the pair $(a,b)$ is not unique: given one such pair $(a,b)$ and any $e\in \mathbb Z,$ the pair $(a+e\frac{[W_j](c)}{d}, b+e\frac{[U_i](c)}{d})$ also works.

Now we want to satisfy the last condition. For every $(a,b)$ as above, $\tilde{h}(x)=\frac{p(x)+a[U_i](x)-b[W_j](x)}{x-c} \in \mathbb Z[x].$ Suppose $2^kp(x)=\mu(x)[U_i](x)-\tau(x)[W_j](x)$ for some $\mu(x)$ and $\tau(x)$ in $\mathbb Z[x].$ Then we want
$$2^k\tilde{h}(x)=\frac{(\mu(x)+2^k a)[U_i](x)-(\tau(x)+2^k b) [W_j](x)}{x-c} \in ([U_i](x),[W_j](x)).$$

Fix some $a$ and $b$. Because the numerator of the above fraction vanishes at $c$ by the choice of $(a,b),$ we have $(\mu(c)+2^k a) [U_i](c) = (\tau(c) +2^k b) [W_j ](c).$ So there exists $e'\in \mathbb Z$ such that $\mu(c)+2^k a = e'\frac{[W_j](c)}{d}$ and $\tau(c) +2^k b =e'\frac{[U_i](c)}{d}.$ Because $d$ is odd, we can choose $e\in \mathbb Z$ so that $d \mid (e'+2^ke).$ Then by changing $(a,b)$ to  $(a+e\frac{[W_j](c)}{d}, b+e\frac{[U_i](c)}{d})$ we get  $(\mu(c)+2^k a,\tau(c) +2^k b)$ to be an integer multiple of $([W_j](c), [U_i](c)).$ Then modulo $(x-c)([U_i](x),{W_j}(x))$ the numerator  of the expression for $2^k\tilde{h}(x)$ is equivalent to an integer multiple of $[W_j](c)[U_i](x) - [U_i](c)[W_j](x).$ Finally,

$$\frac{[W_j](c)[U_i](x) \!- \! [U_i](c)[W_j](x)}{x-c}\!=\!\frac{[W_j](c)\! -\![W_j](x)}{x-c}[U_i](x)\! -\! \frac{[U_i](c)\! -\! [U_i](x)}{x-c}[W_j](x),$$
so it belongs to $([U_i](x),[W_j](x)).$
\end{proof}

\begin{lemma} Suppose $f(x)$ and $g(x)$ are integer polynomials that take the same values on $V_m$. Denote $h(x)=f(x)-g(x) = p(x) \cdot [V_m](x). $ Suppose that $2^kp(x)$ belongs to the ideal $([U_i],[W_j])$ of $\mathbb Z [x].$ Then there exists a polynomial $\tilde{f}\in \mathbb Z[x]$ and $n\geq k$ such that
\end{lemma}

1) $\tilde{f} \equiv f \pmod {[U_{i}][V_m]}$;

2)  if $\tilde{h}=  \tilde{f}- g =\tilde{p}\cdot [V_{m}],$ then $2^n\tilde{p}\in ([U_{i+1}],[W_j]).$  

\begin{proof} 
Denote $c=v_{n+1}.$ We will construct $\tilde{f}$ as $f+a[U_i][V_m]$ for some integer constant $a.$ This will automatically satisfy the first condition.

To satisfy the second condition, suppose $2^kp(x)=\mu(x)[U_i](x)-\tau(x)[W_j](x).$ Note that $2^k\tilde{p}(x)=(\mu(x)+2^k a )[U_i](x)-\tau(x)[W_j](x).$  Suppose the resultant of $[U_{i+1}]=[U_i]\cdot (x-c)$ and $[W_j]$ is $2^{j}d,$ where $d$ is odd. We choose $a$ so that $(\mu(c)+2^k a)$ is a multiple of $d$. Then for $n\geq k+j$ we have  that $2^n\tilde{p}(x)$ is congruent modulo $[U_{i+1}]$ to $2^{n-k}(\mu(c)+2^k a)[U_i](x)-2^{n-k}\tau(x)[W_j](x).$ Because $2^{n-k}(\mu(c)+2^k a)$ is a multiple of  the resultant of $[U_{i+1}]$ and $[W_j],$ which belongs to the ideal $([U_{i+1}],[W_j]),$ we are done.
\end{proof}

\begin{remark} Switching the roles of $U$ and $W,$ we get a version of the above lemma for keeping $U_i$ and $f$ and replacing $W_j$ by $W_{j+1}$ and $g$ by $\tilde{g}$.
\end{remark}

\begin{theorem} \label{TnonLIP} There exists a locally LIP non-LIP function on $X=U\cup V \cup W.$
\end{theorem}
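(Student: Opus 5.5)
We construct $Q\colon X\to\mathbb Z$ as the limit of an inductive process built on the three preceding lemmas. The aim is for $Q$ to be LIP on $U\cup V$ and on $V\cup W$, but for no $2^kQ$ with $k\in\mathbb N_0$ to be LIP on $X$. In particular $Q$ itself will not be LIP.

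Local LIP-ness then comes for free. Every point of $X$ lies in $1+3\mathbb N_0$ or $2+3\mathbb N_0$. We have $1+3\mathbb N_0=U\cup(V\cap(1+3\mathbb N_0))\subseteq U\cup V$, and likewise $2+3\mathbb N_0\subseteq V\cup W$. Since these are basic open sets on which $Q$ is LIP, $Q$ is locally LIP.

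The construction runs as follows.

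\begin{itemize}
\item Start with $f_0=0$ and $g_0$ an integer polynomial, for instance $g_0=[V_1]\cdot 1$ with $m=1$. Arrange at the start that $p_0$ lies in $\frac{1}{2^{k}}([U_i],[W_j])$ but $p_0\notin([U_i],[W_j])$ for small $i,j$. This is the non-LIP seed: by the first lemma, $Q$ restricted to $U_i\cup V_m\cup W_j$ is not LIP.
\item Interleave three kinds of steps: the second lemma (extend $V_m$ to $V_{m+1}$), the third lemma (extend $U_i$ to $U_{i+1}$), and the remark after it (extend $W_j$ to $W_{j+1}$). Cycle through them so that $i,j,m\to\infty$.
\item At each stage, keep the invariant that $f\equiv g$ on $V_m$ and that $2^{k}p\in([U_i],[W_j])$ for the current, possibly growing, exponent $k$.
\item Because each modification only adds multiples of $[U_i][V_m]$ to $f$ or of $[V_m][W_j]$ to $g$, the values of $f$ on $U_i\cup V_m$ and of $g$ on $V_m\cup W_j$ are frozen once fixed. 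The limits therefore define $Q$ on $U\cup V$ (from the $f$'s) and on $V\cup W$ (from the $g$'s), consistently on $V$.
\item $Q|_{U\cup V}$ agrees on every finite subset with some integer polynomial $f$ at a late stage, so it is LIP. The same holds for $Q|_{V\cup W}$.
\end{itemize}

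The main obstacle is ensuring that $Q$ is not LIP on $X$. The invariant only says that $2^{k}Q$ is LIP on each finite $U_i\cup V_m\cup W_j$, and the exponent $k$ may grow, so the invariant gives an upper bound on the denominators rather than a lower one. Non-LIP-ness is instead a finite condition and is decided once. I would choose the seed so that on some fixed finite set $U_{i_0}\cup V_{m_0}\cup W_{j_0}$ the interpolation polynomial of $Q$ is non-integral. For example, take $f=0$ and $g=[V_1]$ with $V_1=\{1\}$, and check via the first lemma that $[V_1]\notin([U_{i_0}][V_1],[V_1][W_{j_0}])$. Equivalently, $1\notin([U_{i_0}],[W_{j_0}])$, which holds as soon as the resultant is even, since $U$ and $W$ elements have opposite parity.

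I must also verify that $2\cdot 1\in([U_{i_0}],[W_{j_0}])$ up to a power of $2$, so that the invariant holds initially. For $i_0=j_0=1$ with $U_1=\{1\}$ and $W_1=\{2\}$ this is immediate, since $(x-1)-(x-2)=1$. That gives $1\in([U_1],[W_1])$ and so no seed; I therefore need larger sets. I would take $U_1=\{1\}$ and $W_2=\{2,8\}$. The ideal $(x-1,(x-2)(x-8))$ contains the resultant $(1-2)(1-8)=7$, which is odd and hence also no seed. So the seed must instead be chosen as a genuine divided-difference obstruction. I would pick explicit values of $Q$ on $\{1,2,3,5,\dots\}$ with a divided difference in $\frac12\mathbb Z\setminus\mathbb Z$, consistent with circuits lying in a progression of difference $2$ (by Lemma \ref{Circuit Lemma}), while remaining integral on the $U\cup V$ and $V\cup W$ parts. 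After that, the lemmas propagate the seed to all of $X$, and since the bad finite set stays inside $X$ with frozen values, $Q$ is not LIP.
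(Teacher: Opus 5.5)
There is a genuine gap. Your framework matches the paper's:
\begin{itemize}
\item grow a pair $(f,g)$ with the lemma extending $V_m$ to $V_{m+1}$, the lemma extending $U_i$ to $U_{i+1}$, and the remark doing the same for $W_j$;
\item freeze values and pass to the limit;
\item read off non-LIP-ness from a fixed finite set.
\end{itemize}
But the step that carries the theorem is missing. You never exhibit a starting pair that is non-LIP on some $U_{i_0}\cup V_{m_0}\cup W_{j_0}$ \emph{and} satisfies $2^kp\in([U_{i_0}],[W_{j_0}])$, which the lemmas need as input.

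Your explicit attempts fail because the sets are misidentified. $V=1+2\mathbb N_0$ is the set of odd numbers, so $U=(1+3\mathbb N_0)\setminus V=\{4,10,16,\ldots\}$ and $W=\{2,8,14,\ldots\}$. Hence $1\notin U$, $U_1=\{4\}$, and $U$ and $W$ are both even, not of opposite parity. Your fallback ("explicit values on $\{1,2,3,5,\ldots\}$ with a half-integral divided difference") is unspecified. It is also never checked against the ideal-membership hypothesis. Finally, the points $1,2,3,5$ all lie in $V\cup W$, where $Q$ must be LIP, so any obstruction must use a point of $U$ together with a point of $W$.

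With the correct sets, the seed you wrote down first works and is exactly the paper's: $f=0$ on $U_1\cup V_1=\{4,1\}$ and $g=x-1$ on $V_1\cup W_1=\{1,2\}$.
\begin{itemize}
\item Here $p=-1$, and $2=(x-2)-(x-4)\in([U_1],[W_1])$ gives the invariant with $k=1$.
\item On the other hand, $1\notin(x-4,x-2)$ because $\mathbb Z[x]/(x-4,x-2)\cong\mathbb Z/2$.
\item The glued function on $\{1,2,4\}$ has interpolation polynomial $\frac12(x-1)(4-x)$ and circuit $\{2,4\}$, and these values stay frozen.
\end{itemize}
More generally, $1\notin([U_i],[W_j])$ for all $i,j\geq 1$, since both generators reduce modulo $2$ to powers of $x$. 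It is the oddness of $V$ against the evenness of $U\cup W$ that makes $[U_i](c)$ and $[W_j](c)$ odd in the $V_m\to V_{m+1}$ step.

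A minor slip: not every point of $X$ lies in $1+3\mathbb N_0$ or $2+3\mathbb N_0$ (e.g.\ $3$, $9$). Those points lie in the basic set $V\subseteq U\cup V$, so local LIP-ness still follows.
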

\begin{proof}
We start from a pair of polynomials $f_1(x)=0$ and $g_1(x)=x-1$ considered as functions on $U_1\cup V_1 =\{4,1\}$ and $V_1\cup W_1=\{1,2\}.$ They take the same values at $1\in V_1$ but putting them together to form a function on  $U_1\cup V_1\cup W_1$ gives a non-LIP function $\frac{1}{2}(x-1)(4-x).$ We will then use repeatedly Lemma 8, Lemma 9, and Remark 6  to grow this pair to a pair $(f_n,g_n)$ of integer polynomial functions on $U_n\cup V_n$ and $V_n\cup W_n,$ that take the same values on $V_n$. Putting them together, we will get a locally LIP function on $X$ that is not LIP, because it has a circuit $\{2,4\}.$
\end{proof}

\begin{remark} My student Mithun Veettil has proved that every locally LIP function on $X$ is $\mathbb Z[\frac{1}{2}]$-LIP, meaning that all of its interpolation polynomials on $X$ are in $\mathbb Z[\frac{1}{2}] [x]$ (cf. \cite{MithunThesis}).  It would be very interesting to figure out the exact structure of the quotient of $\mathcal{LIP}(X)$ by $\textrm{LIP} (X)$. For starters, this is a $\textrm{LIP} (X)$-module and an abelian pro-2-group. The last assertion has to do with the projective system of LIP functions on $X_n=U_n\cup V_n \cup W_n$ satisfying Mittag-Leffler property.
\end{remark}

Interestingly, if we take a union of the set $X$ above with, say, the basic set $1+5\mathbb N_0,$ then on the resulting set every locally LIP function  is LIP. This is part of the following theorem, that will be truly used in the next section.

\begin{theorem} \label{Nest} Suppose $U_1,\dots. U_n$ are almost basic sets. Suppose $V_1,\dots, V_m $ are pairwise intersecting almost basic sets that satisfy the following conditions:

1) every $U_i$ intersects with every $V_j$;

2) for every two different $x$ and $y$ in $U=\bigcup \limits_{i=1}^n U_i$ and every prime $p$ that divides $x-y$ there is some $v$ in $V=\bigcup \limits_{j=1}^m V_j$ such that $x\equiv v \pmod p.$

Then for $X=U\cup V$ we have $\mathcal{LIP}(X)=\textrm{LIP}(X).$

\end{theorem}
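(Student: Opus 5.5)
Let $f\in\mathcal{LIP}(X)$. The plan is to show that $f$ has no circuit at all. We do this by moving the points of a hypothetical circuit out of $U$ and into $V$ one at a time, using the Circuit Improvement Lemma (Lemma \ref{CircuitImprovement}), until the circuit lies in a set where $f$ is already known to be LIP.

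\emph{Step 1: $f$ is LIP on each $U_i\cup V$.} For each $i$ the sets $U_i,V_1,\dots,V_m$ are almost basic and pairwise intersecting, by hypothesis 1) and the pairwise intersection of the $V_j$. By Theorem \ref{Tmain+}, $f$ is LIP on each of these sets. By Lemma \ref{Lstar}, it is then LIP on $U_i\cup V$. (Alternatively, this follows from Theorem \ref{Infstar}.)

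\emph{Step 2: the common core of $V$.} By Lemma \ref{Lconsistency} and the remark after Definition \ref{Dstar}, $V_0=\bigcap_j V_j$ is, up to a closed set of density zero, a basic progression $a+D\mathbb N_0$ with $D$ square-free. We claim: for distinct $x,y\in U$ there are infinitely many $e\in V_0\cap <x,y>$. Write $x-y=\pm\prod p^{k_p}$. We want $e\equiv x \pmod{p^{k_p}}$ for all such $p$, together with $e\in a+D\mathbb N_0$. Consider a prime $p$ dividing $x-y$.
\begin{itemize}
\item If $p\nmid D$, there is no conflict between the two congruences modulo $p$.
\item If $p\mid D$, then $p$ divides some $d_j$, where $<V_j>=a_j+d_j\mathbb N_0$. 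Since $V_j$ meets every other $V_{j'}$ and $d_j$ is square-free, every element of $V$ that is covered by the $V_{j'}$ with $p\mid d_{j'}$ is $\equiv a \pmod p$. The $V_{j'}$ with $p\nmid d_{j'}$ are not constrained modulo $p$. So hypothesis 2) only guarantees a $v\in V$ with $v\equiv x\pmod p$, and this $v$ may lie in some $V_{j'}$ with $p\nmid d_{j'}$. This is the delicate point. I expect to resolve it by choosing $e$ inside a single suitable $V_j$ (an almost basic, hence open, set), rather than inside $V_0$.
\end{itemize}
The Proximity Principle (Lemma \ref{PP}) lifts "some element $\equiv x \pmod p$" to infinitely many elements $\equiv x \pmod{p^{k}}$ inside a given open set. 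The Chinese Remainder Theorem then combines the conditions for the different primes. Finally, positive density of the resulting progression inside an almost basic set lets us avoid the removed density-zero set. The result is an $e\in V\cap <x,y>$.

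\emph{Step 3: circuit reduction.} Suppose $f\notin\textrm{LIP}(X)$, and let $Y$ be a circuit of minimal size $n$. Count the points of $Y$ that lie in $U\setminus V$.
\begin{itemize}
\item If there is at most one such point, then $Y\subseteq U_i\cup V$ for some $i$. This contradicts Step 1.
\item Otherwise, order $Y$ so that $x_1,x_2\in U\setminus V$. Pick $e\in V\cap <x_1,x_2>$ by Step 2. Apply Lemma \ref{CircuitImprovement} with $m=2$. By minimality of $n$, all proper subsets of the new set give integer interpolation polynomials, so $Y\cup\{e\}\setminus\{x_i\}$ is again a circuit of size $n$, for $i=1$ or $2$. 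It has one fewer point in $U\setminus V$.
\end{itemize}
Iterating this reaches the first case, which gives the contradiction.

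The main obstacle is Step 2. The difficulty is producing a single $e$ that lies in one open piece of $V$ and is congruent to $x$ modulo every prime power dividing $x-y$, when hypothesis 2) may serve different primes with different $V_j$'s. If the reduction to a single $V_j$ fails, the fallback is to use Lemma \ref{CircuitImprovement} with larger $m$ and the Divisibility Condition trick from the proof of Theorem \ref{Tmain+}. The idea there would be to replace the $U$-points by new points lying in various $V_j$, chosen so that their LIP-neighborhoods pairwise intersect, and then conclude with Lemma \ref{Lstar}.
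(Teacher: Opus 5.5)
\textbf{There is a genuine gap in Step 2.} Your Step 1 and the descent in Step 3 (take a circuit of minimal size, then minimize the number of its points in $U\setminus V$) match the paper's framework. The device you use to push a point into $V$, however, fails. Lemma~\ref{CircuitImprovement} with $m=2$ requires $e\in V\cap<x_1,x_2>$, i.e.\ $e\equiv x_1$ modulo every prime power dividing $x_1-x_2$ at once, and the claim of Step 2 is false under the theorem's hypotheses.

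Take $V_1=1+3\mathbb N_0$, $V_2=1+5\mathbb N_0$, $U_1=1+2\mathbb N_0$, $U_2=2+7\mathbb N_0$. Then:
\begin{itemize}
\item the $V_j$ meet, and each $U_i$ meets each $V_j$ (at $1$, resp.\ $16$);
\item condition 2) holds for every prime $p$, because one of $V_1,V_2$ has common difference prime to $p$;
\item for $x_1=2\in U_2$ and $x_2=17\in U_1$, both outside $V$, we have $<2,17>=2+15\mathbb N_0$, all of whose elements are $\equiv 2$ modulo both $3$ and $5$, so $<2,17>\cap V=\emptyset$.
\end{itemize}
Thus no $e$ exists in $V_0=1+15\mathbb N_0$, in a single $V_j$, or anywhere in $V$. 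Your fallback has no room either. The set $\{2,17\}$ lies in neither $U_1\cup V$ nor $U_2\cup V$, so Step 1 does not exclude it as a circuit. For it $n=m=2$, and every point Lemma~\ref{CircuitImprovement} could swap in lies in $<2,17>\cap X\subseteq U\setminus V$. Hypothesis 2) works one prime at a time and cannot deliver simultaneous congruences.

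\textbf{The missing idea} is that you only need to keep \emph{one} prime of the denominator alive, so membership in $<x_1,x_2>$ is unnecessary. Let $c$ be the leading coefficient of $f_Y$ and $p$ a prime dividing its denominator.
\begin{itemize}
\item By Lemma~\ref{Circuit Lemma}(2), $p\mid x_1-x_2$, so 2) gives $v\in V$ with $v\equiv x_1\pmod p$.
\item The Proximity Principle (Lemma~\ref{PP}; $V$ is open) then yields $e\in V\setminus Y$ with $e\equiv x_1\equiv x_2\pmod{p^k}$, where $p^k$ is the exact power of $p$ dividing $x_1-x_2$.
\item Use the relation from the proof of Lemma~\ref{CircuitImprovement} (Remark~\ref{DivDiff3}), $c(x_1-x_2)=c_1(e-x_2)-c_2(e-x_1)$, where $c_i$ is the leading coefficient of $f$ on $Y\cup\{e\}\setminus\{x_i\}$. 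It holds for every $e\notin Y$, with no integrality or AC-closure assumption.
\item Its left side has $p$-adic valuation at most $k-1$, while $p^k$ divides $e-x_1$ and $e-x_2$. Hence $c_1$ or $c_2$ is not $p$-integral.
\end{itemize}
The corresponding swap is then a circuit of the same minimal size with one fewer point in $U\setminus V$. This is the paper's argument; in the example above it runs with $p=3$, $e=11$, or with $p=5$, $e=7$.
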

\begin{proof} Suppose $f$ is locally LIP on $U\cup V.$ Then by Theorem \ref{Tmain+} and Lemma \ref{Lstar} it is LIP on $U_i\cup V$ for every $i.$  Suppose $f$ is not LIP on $U\cup V.$ Then it has a circuit on $U\cup V;$ we choose such circuit $\{x_1,\dots x_k\}$ with the smallest number of elements not in $V$ among the circuits of the smallest size.  By the above,  that circuit must contain at least two elements in $U\setminus V,$ in different $U_i$. Suppose $p$ is a prime that divides the denominator of the divided ratio $c$ of that circuit (cf. Lemma \ref{Circuit Lemma}). Then all elements of the circuit are congruent modulo $p$. From the second condition, we can find $v\in V$ that is also congruent to them modulo $p$. By renumbering, we can assume that $x_1,\dots, x_l$ are in $U\setminus V,$ while $x_{l+1},\dots, x_k$ are in $V.$ Suppose $p^k$ is the largest power of $p$ that divides all differences of $x_1,\dots, x_l$. By the Proximity Principle (Lemma \ref{PP}), we can find $e$ in $V$,  not in our circuit, such that $u_1 \equiv \ldots \equiv u_l \equiv e \pmod {p^k}.$ We can assume that $p^{k+1}$ does not divide $x_1-x_2.$ As in the proof of Lemma \ref{CircuitImprovement} (cf. Remark \ref{DivDiff3}), we get $c(x_1-x_2)=c_2(e-x_1)-c_1(e-x_2),$ where $c_1$ and $c_2$ are divided differences obtained by replacing $x_1$ and $x_2$ respectively with $e$. By looking at this equation p-adically, and recalling that $p$ divides the denominator of $c$, we get that $p$ must divide the denominator of $c_1$ or $c_2.$ But then replacing $x_1$ or $x_2$ by $e$ gives a new circuit of the same size but with smaller number of elements outside $V$, which contradicts our choice of the circuit.
\end{proof}

\begin{definition} \label{Nest} We will call a set, with the distinguished collections of subsets $\{U_i\}$ and $\{V_j\}$ that satisfy the conditions of the above theorem a nest. Almost star-like sets are also considered to be nests, with $n=0$. We will say that a nest is made of the sets $\{U_i\}$ and $\{V_j\}$. We will call $\{V_j\}$ the core and $\{U_i\}$ the straw part of the nest.
\end{definition}

\section{$H^1$ and  Higher Cohomology}

Our intuition suggests that almost basic sets form a Leray cover for the arithmetic structure sheaf, that is $H^i(U,\mathcal {LIP})=0$ for all $i>0$ for almost basic sets $U$ (cf. \cite{Taylor}). From \cite{Taylor}, Theorem 7.8.5, this would imply that the sheaf cohomology groups $H^i(U,\mathcal {LIP})$ coincide with $\check{\textrm C}$ech cohomology using any cover by almost basic sets. We cannot prove it yet, but we will prove it in this section for $n=1$. We assume some familiarity with sheaf cohomology and $\check{\textrm C}$ech cohomology. Since we are primarily concerned with $n=1$, we will avoid the spectral sequences formalism.

\begin{remark}[on notation] When the desire for brevity of notation outweighs the risk of being insufficiently pedantic, we will say that a function $f: X \to \mathbb Z$ is LIP on $Y\subseteq X$ whenever the restriction of $f$ to $Y$ is in $\textrm{LIP}(Y).$ In the same spirit of suppressing the obviously necessary restriction maps, if $f_1: X_1 \to \mathbb Z$ and if $f_2: X_2 \to \mathbb Z$ are functions, we will say that $f_1-f_2$ is LIP on $X_1\cap X_2$ whenever the difference of the restrictions is LIP.
\end{remark}

In order to work with $H^1(U,\mathcal {LIP})$, it is convenient to make some new definitions.

\begin{definition} We denote by ${\mathcal Z}$ the sheaf of all  $\mathbb Z$-valued functions.  Note that $\mathcal{LIP}$ is a subsheaf of ${\mathcal Z}$. The presheaf $\textrm{ZL}$ is defined by $\textrm{ZL}(U)=\mathcal{Z}(U)/\mathcal{LIP}(U).$ Note that if $\mathcal{LIP}(U)=\textrm{LIP}(U),$ then $\textrm{ZL}(U)=\mathcal{Z}(U)/\textrm{LIP}(U).$  Finally, we define the sheaf $\mathcal{ZL}$ to be the sheafification of the presheaf $\textrm{ZL}.$ 
\end{definition}

\begin{lemma} \label{longexact} Suppose $U$ is a Kirch open set. Then

1) $\textrm{ZL}(U)$ naturally embeds into  $\mathcal{ZL}(U)$.

2) The quotient of $\mathcal{ZL}(U)$ by the $\textrm{ZL}(U)$ is isomorphic to $H^1(U,\mathcal {LIP})$.

3) For every $k\geq 2$ we have  $H^{k-1}(U,\mathcal {ZL}) \cong H^k(U,\mathcal {LIP})$.
\end{lemma}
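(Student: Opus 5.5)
The plan is to use the short exact sequence of sheaves
$$0\to \mathcal{LIP}\to \mathcal Z\to \mathcal{ZL}\to 0$$
on $U$ with the Kirch topology, together with its long exact cohomology sequence. Exactness here means exactness on stalks. First I will check that $\mathcal{ZL}$ is the sheaf cokernel of the inclusion $\mathcal{LIP}\hookrightarrow\mathcal Z$. This holds because sheafification is exact, and the sheafification of the presheaf cokernel $\textrm{ZL}$ is by definition $\mathcal{ZL}$. Stalks are unchanged by sheafification, so on stalks we get $0\to\mathcal{LIP}_a\to\mathcal Z_a\to \textrm{ZL}_a\to 0$, which is exact at every point $a$.

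The key input is that $\mathcal Z$ is flasque. Indeed, any $\mathbb Z$-valued function on an open $V\subseteq U$ extends to $U$, say by zero. Flasque sheaves are acyclic, so $H^k(U,\mathcal Z)=0$ for all $k\geq 1$. The long exact sequence then reads
$$0\to \mathcal{LIP}(U)\to \mathcal Z(U)\to \mathcal{ZL}(U)\to H^1(U,\mathcal{LIP})\to 0,$$
and for $k\geq 2$ the segments $0=H^{k-1}(U,\mathcal Z)\to H^{k-1}(U,\mathcal{ZL})\to H^k(U,\mathcal{LIP})\to H^k(U,\mathcal Z)=0$ give statement 3) immediately.

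For statements 1) and 2), the image of $\mathcal Z(U)\to\mathcal{ZL}(U)$ is $\mathcal Z(U)/\mathcal{LIP}(U)=\textrm{ZL}(U)$, by exactness at $\mathcal Z(U)$. This map factors as $\mathcal Z(U)\to \textrm{ZL}(U)\to \mathcal{ZL}(U)$, where the second arrow is the canonical map from a presheaf to its sheafification. Exactness at $\mathcal Z(U)$ says the kernel of the composite is exactly $\mathcal{LIP}(U)$, so the canonical map is injective; this is 1). I can also see 1) directly. A class $[g]\in\textrm{ZL}(U)$ maps to zero iff every point has a neighborhood on which $g$ is locally LIP. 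Since $\mathcal{LIP}$ is a sheaf, $g$ is then locally LIP on all of $U$, so $[g]=0$. Statement 2) is then exactness at $\mathcal{ZL}(U)$ together with surjectivity onto $H^1(U,\mathcal{LIP})$, which uses $H^1(U,\mathcal Z)=0$.

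There is no serious obstacle here; the argument is the standard one. The only points needing care are that $\mathcal{ZL}$ really is the sheaf quotient, so that the sequence is exact as sheaves, and the flasqueness of $\mathcal Z$. Both are formal and hold in any topological space, so no arithmetic of Kirch topology enters.
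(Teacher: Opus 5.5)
Your proposal is correct and is essentially the paper's proof: the short exact sequence $0\to\mathcal{LIP}\to\mathcal Z\to\mathcal{ZL}\to 0$, flasqueness (hence acyclicity) of $\mathcal Z$, and the resulting long exact cohomology sequence. You also fill in two points the paper compresses into ``the result follows'': that $\mathcal{ZL}$ is the sheaf cokernel, and that the image of $\mathcal Z(U)$ in $\mathcal{ZL}(U)$ is exactly $\textrm{ZL}(U)$, embedded by the canonical map to the sheafification.
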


\begin{proof}
Be definition, we have a short exact sequence of sheaves
$$0\to \mathcal{LIP}\to \mathcal{Z}\to \mathcal{ZL}\to 0.$$

Because $\mathcal{Z}$ is flasque, $H^i(U,\mathcal {Z})=0$  for all $U$ and all $i>0.$ By the long exact sequence of cohomology of sheaves, we get the following exact sequences:
$$0\to \mathcal{LIP}(U) \to \mathcal{Z} (U) \to \mathcal{ZL}(U) \to H^1(U,\mathcal {LIP}) \to 0; $$
$$0\to  H^{k-1}(U,\mathcal {ZL}) \to H^k(U,\mathcal {LIP}) \to 0, \ \ for\ all\  k \geq 2.$$
The result follows.
\end{proof}

As a corollary, $H^1(U,\mathcal {LIP})=0$ if and only if $\mathcal{ZL}(U) = \textrm{ZL}(U)$. By the definition of the quotient sheaf as the sheafification of the quotient presheaf, elements of $\mathcal{ZL}(U)$ can be represented as systems of integer-valued functions $f_i$ on some $U_i$ that cover $U$ such that for any intersecting $U_i$ and $U_j$ the difference of restrictions of $f_i$ and $f_j$ is locally LIP on $U_i\cap U_j.$ We can always choose these $U_i$ to be basic sets, so that locally LIP functions on $U_i\cap U_j$ are LIP. Such system represents an element in the presheaf $\textrm{ZL}(U)$ iff there is some $f: U\to \mathbb Z$ such that for each $U_i$ the difference $f-f_i$ is locally LIP (thus, LIP) on $U_i.$

\begin{remark}The above considerations basically reduce proving that $H^1 (U,\mathcal {Z})=0$  for an almost basic set $U$  to proving that  $\check{\textrm{C}}$ech cohomology $\check{H}^1_{C} (U,\mathcal {Z})=0,$ for a set of covers $\{C\}$ of $U$ by basic sets, that contains a refinement of every open cover of $U$.
\end{remark}

The following theorem is unexpectedly strong.

\begin{theorem} \label{MainH1} Suppose $U$ and $W$ are open sets, and $V=U\cap W.$ Then every LIP function on $V$ can be written as a difference of restrictions of a LIP function on $U$ and a LIP function on $W.$
\end{theorem}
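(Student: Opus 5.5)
The plan is to build the required pair $(f,g)$ as a limit of pairs of integer polynomials, in the spirit of the proof of Theorem \ref{TnonLIP}. The approach rests on one observation. If $Y=\{y_1,y_2,\dots\}$ is any enumeration of an infinite set, then a function on $Y$ is LIP exactly when it is a pointwise limit of integer polynomials $F_n$ with $F_{n+1}\equiv F_n \pmod{[\{y_1,\dots,y_n\}]}$. One direction is the product-sum representation (\cite{LIP}, Theorem 1). For the other, the values on $\{y_1,\dots,y_N\}$ stabilize from $F_N$ on, so every finite subset eventually sees a single integer polynomial. It therefore suffices to construct sequences $f_n,g_n\in\mathbb Z[x]$ and increasing finite sets $A_n\subset U$, $B_n\subset W$, $C_n\subset V$ with $\bigcup A_n=U$, $\bigcup B_n=W$, $\bigcup C_n=V$, satisfying three conditions. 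First, $f_{n+1}\equiv f_n \pmod{[A_n]}$. Second, $g_{n+1}\equiv g_n\pmod{[B_n]}$. Third, $f_n-g_n$ agrees with $h$ on $C_n$. The limits $f$ and $g$ are then LIP on $U$ and $W$, and $f-g=h$ on $V$.

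I would keep $C_n\subseteq A_n\cap B_n$ and write $A_n=C_n\sqcup A_n'$ and $B_n=C_n\sqcup B_n'$, with $A_n'\subset U\setminus V$ and $B_n'\subset W\setminus V$. There are three kinds of moves.

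\emph{Adding a point $u\in U\setminus V$ to $A'$.} This costs nothing: we change nothing and simply enlarge $A_n$; the same holds for $W\setminus V$.

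\emph{Adding a point $v$ of $V$.} Let $h_n$ be the integer interpolation polynomial of $h$ on $C_n\cup\{v\}$, which is integral since $h$ is LIP. We seek $f_{n+1}=f_n+\alpha[A_n]$ and $g_{n+1}=g_n+\beta[B_n]$ with $f_{n+1}-g_{n+1}\equiv h_n \pmod{[C_n\cup\{v\}]}$. Since $f_n-g_n-h_n$ vanishes on $C_n$, it equals $[C_n]\,q$ for some $q\in\mathbb Z[x]$. After cancelling $[C_n]$, the condition becomes the single numerical equation
$$\alpha(v)[A_n'](v)-\beta(v)[B_n'](v)=-q(v),$$
and constant $\alpha,\beta$ suffice. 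So the step succeeds whenever $\gcd([A_n'](v),[B_n'](v))$ divides $q(v)$, and in particular whenever the gcd is $1$.

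\emph{Scheduling.} We interleave the moves, taking the least untreated element of each of $U\setminus V$, $W\setminus V$ and $V$ in turn, so that all three sets are exhausted.

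The main obstacle is the gcd condition in the $V$-step. A prime $p$ can divide both $[A_n'](v)$ and $[B_n'](v)$ only if $v\equiv u\equiv w\pmod p$ for some $u\in A_n'$ and $w\in B_n'$. I would try to avoid this by choosing the order of insertion of $V$-points rather than taking them in increasing order. Since $V$ is open, it contains a progression $v_0+d\mathbb N_0$ with $d$ square-free. For the finitely many offending primes $p\nmid d$, the Chinese Remainder Theorem and the Proximity Principle (Lemma \ref{PP}) give a $v$ in this progression that avoids all the bad residues, and such a $v$ can be inserted first.

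The delicate case is a prime $p\mid d$ with $u\equiv w\equiv v_0\pmod p$. Here I would exploit the fact that $q(v)$ is not fixed: it depends on the earlier free choices of $\alpha$ and $\beta$, which are determined only up to adding multiples of $[B_n'](v)/\gcd$ and $[A_n'](v)/\gcd$. The idea is to use this freedom, as in the proof of Lemma 8, to arrange in advance that the relevant $p$-power divides $q$ at all future $V$-points near $u$ $p$-adically. Failing that, I would first insert a $V$-point $p$-adically close to $u$, and only then insert $u$ and $w$. I expect this bookkeeping with the primes dividing $d$ to be the crux of the argument.
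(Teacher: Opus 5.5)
You leave the crux open, so there is a genuine gap. The scheme as designed can also reach dead ends, so the missing step is more than bookkeeping. Your reduction to compatible sequences of integer polynomials is sound. But the whole content of the theorem is that the inductive step can always be carried out, and that is exactly what the proposal does not establish.

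Three features of your setup cause trouble together. Since $C_n\subseteq A_n\cap B_n$, you pin $f$ and $g$ individually at every matched point of $V$. Points of $U\setminus V$ and $W\setminus V$ are added with no choice. And $\alpha,\beta$ are constants. Here is an example where this fails. Take $U=(1+6\mathbb N_0)\cup(3+10\mathbb N_0)$ and $W=(1+6\mathbb N_0)\cup(7+10\mathbb N_0)$, so $V=1+6\mathbb N_0$, and let $h(x)=x$.
\begin{itemize}
\item Insert any $c\in V$ first, which gives $f_1=\alpha$, $g_1=\beta$ with $\alpha-\beta=c$.
\item Then insert $3\in U\setminus V$ and $17\in W\setminus V$.
\item For every further $v\in V$ you get $q(v)=-1$, while $v-3$ and $v-17$ are both even.
\end{itemize}
In fact no extension exists at all. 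LIP-ness on $\{c,3,v\}$ and $\{c,17,v\}$ forces $f(v)=\alpha+k(v-c)(v-3)$ and $g(v)=\beta+l(v-c)(v-17)$ with $k,l\in\mathbb Z$. Hence $f(v)-g(v)-c\in 2(v-c)\mathbb Z$, whereas you need it to equal $v-c$.

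Every element of $V$ is odd, so no reordering of $V$ helps, and choosing $c$ $2$-adically close to $3$ does not help either. This shows that postponing ``bad'' points, as in your treatment of $p\nmid d$, only defers the problem. The relevant dichotomy is not $p\mid d$ versus $p\nmid d$, but whether $V$ meets $u+p\mathbb Z$. The dead end comes from an earlier choice: $f_1=x$, $g_1=0$ would have worked. Your proposal gives no rule that guarantees good choices. Indeed, the choice-independent form of your $V$-step, $[C_n]\in([C_n](x-v),[A_n],[B_n])$, is equivalent to $\gcd([A_n'](v),[B_n'](v))=1$. This fails whenever some $u\in A_n'$, $w\in B_n'$ and $v$ are congruent modulo a prime.

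The missing idea is the paper's key lemma, which requires decoupling the two kinds of constraints. The function $f$ is pinned only on $U_n$ and $g$ only on $W_n$, while $f-g$ is matched to $h$ on a much longer initial segment $V_m$. One then shows that for each $n$ and all $m\ge m_n$,
\[
[V_m]\in([V_{m+1}],[U_n],[W_n]).
\]
This makes every $V$-step solvable regardless of earlier choices, with polynomial rather than constant multipliers.

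The lemma is proved as follows.
\begin{itemize}
\item Let $R$ be the resultant of $[U_n\setminus V]$ and $[W_n\setminus V]$. It gives $R[V_m]\in([U_n],[W_n])$, so the ideal quotient by $[V_m]$ contains $R$ and $x-v_{m+1}$.
\item Fix a prime $p\mid R$. Consider those $u_i\in U_n\setminus V$ that are congruent mod $p$ to some element of $V$. By the Proximity Principle, $V_m$ contains points congruent to each such $u_i$ modulo $p^{\mathrm{ord}_pR}$.
\item With these points in $V_m$, the multiplier $\frac{R}{p^{\mathrm{ord}_pR}}\prod(x-u_i)$, taken over the remaining $u_i$, sends $[V_m]$ into the ideal. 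Its value at $v_{m+1}$ is prime to $p$.
\end{itemize}
Finally, write $h=\sum_n h_n[V_{m_n}]$ and repeatedly rewrite $[V_{m_n}]$ via $[U_n]$, $[W_n]$ and $[V_{m_{n+1}}]$; this yields the two product-sums. Your instinct to place $V$-points $p$-adically near $u$ before committing at $u$ is right. It only works in this decoupled form, where those $V$-points are not yet pinned individually for $f$ and $g$.
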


\begin{proof}
We can assume $V \neq \emptyset ,$ so all three sets are infinite. Suppose $U=\{u_1,u_2,\dots \},$ $U_n=\{u_1,u_2,\dots, u_n\}$ and similarly for $V$ and $W$. Then we have the following lemma relating splits of these finite sets.

\begin{lemma} For every $n$ for all large enough $m$ $[V_m] \in ( [V_{m+1}], [U_n], [W_n]).$
\end{lemma}

\begin{proof} For large enough $m$  $V_m$ contains $U_n\cap V$ and $W_n\cap V.$ So $U_n\setminus V_m= U_n\setminus V$ and $W_n\setminus V_m= W_n\setminus V$. Suppose $R$ is the resultant of $[U_n\setminus V]$ and $[W_n\setminus V]$. Clearly $R\neq 0$ and $R\in [U_n\setminus V], [W_n \setminus V]).$ Therefore, $R[V_m] \in ([U_n],[W_n]).$  Recall that $[V_{m+1}](x)=[V_m](x) (x-v_{m+1}).$ So the ideal $[  ( [V_{m+1}], [U_n], [W_n]) \ : \ ([V_m] )]$ contains $R$ and $x-v_{m+1}$; we want to show that it contains $1$. To do this, it is sufficient to find for every prime $p$ dividing $R$ a polynomial $f(x)$ such that $f \cdot [V_m] \in ( [V_{m+1}], [U_n], [W_n])$ and $p$ does not divide $ f(v_{m+1})$.

For every $u_i\in U_n\setminus V$ there are two possibilities: there is $v\in V$ congruent to $u_i$ modulo $p$ or there is not. We will call those $i$ that such $v$ exists, of first kind and the others of second kind (relative to $p$). Because $V$ is open in Kirch topology, by the Proximity Principle (Lemma \ref{PP}) for $i$ of the first kind one can find $v_{i,p}$ that are congruent to $u_i$ modulo $p^{ord_p(R)}.$ Moreover, we can take all $v_{i,p}$ to be distinct and not in $U_n$. Fix this collection of $\{v_{i,p}\}$. If $m$ is large enough so that $V_m$ contains all  $v_{i,p},$ we take $f(x) = \prod \limits_{i\ of\ 2nd\  kind} (x-u_i). $

Note that $p$ does not divide $f(v_{m+1})$. Note also that $f(x)\cdot [V_m](x)$ is congruent modulo $p^{ord_p(R)}\mathbb Z[x]$ to a multiple of $[U_n](x).$ Replacing $f(x)$ by $\frac{R}{p^{ord_p(R)} }f(x),$ we get that $f(v_{m+1})$ is still not divisible by $p$ and $f(x)\cdot [V_m](x) \in ( [V_{m+1}], [U_n], [W_n]),$ as promised.
\end{proof}

Due to the above lemma, we can take a strictly increasing sequence  $\{m_n\}_{n=1}^{\infty}$ such that for each $n$ for all $m\geq m_n$ we have $[V_m]\in ([V_{m+1}],[U_n], [W_n])$. Note that by repeated application of this condition, we get that $[V_{m_n}]\in ([V_{m_{n+1}}],[U_n], [W_n]).$

Suppose $f\in \textrm{LIP} (V).$ Combining together the terms of  the product-sum for $f$, we can write it as $\sum_{n=0}^{\infty} f_n(x) [V_{m_n}](x)$ for some polynomials $f_n(x).$ 
Starting with $n=1$, we rewrite $[V_{m_n}]$ as a linear combination with polynomial coefficients of  $[U_n],$ $[W_n],$ and $[V_{m_{n+1}}]$. We do this repeatedly, for $n=1,2,\dots$ to rewrite the sum for $f(x)$ as a sum of product-sums with polynomial coefficients of $[U_n]$ and $[W_n].$ This splits the sum for $f(x)$ as sum (or difference) of LIP functions on $U$ and $W.$ Note that every element of $V$ is only affected by a finite part of this infinite procedure, so the formal equality of sums is a legitimate equality of functions on $V.$
\end{proof}

\begin{corollary} \label{GlueZL} Suppose $U$ and $W$ are open sets, and $V=U\cap W.$ Suppose $\mathcal{LIP}(V)=\textrm{LIP}(V).$ Suppose $\alpha \in \textrm{ZL}(U),$ $\beta\in \textrm{ZL}(W),$ and the restrictions of $\alpha$ and $\beta$ to $V$ are equal. Then there is $\gamma \in \textrm{ZL}(U\cup W)$ such that its restrictions to $U$ and $W$ are $\alpha$ and $\beta$ respectively.
\end{corollary}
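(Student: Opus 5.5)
Represent $\alpha$ by an integer-valued function $a: U\to\mathbb Z$ and $\beta$ by $b: W\to\mathbb Z$. Thus $\alpha$ is the class of $a$ modulo $\mathcal{LIP}(U)$, and $\beta$ is the class of $b$ modulo $\mathcal{LIP}(W)$. Equality of the restrictions to $V$ means that $a|_V-b|_V\in\mathcal{LIP}(V)$. By the hypothesis $\mathcal{LIP}(V)=\textrm{LIP}(V)$, this difference is an honest LIP function $h$ on $V$.

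Apply Theorem \ref{MainH1} to $h$: we can write $h=\phi|_V-\psi|_V$ with $\phi\in\textrm{LIP}(U)$ and $\psi\in\textrm{LIP}(W)$. Now replace the representatives by $a'=a-\phi$ on $U$ and $b'=b-\psi$ on $W$. Since $\phi$ and $\psi$ are LIP, hence locally LIP, $a'$ still represents $\alpha$ and $b'$ still represents $\beta$. On $V$ we get $a'-b'=(a-b)-(\phi-\psi)=h-h=0$, so $a'$ and $b'$ agree pointwise on $V$.

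Glue them into a single function $g:U\cup W\to\mathbb Z$, with $g=a'$ on $U$ and $g=b'$ on $W$. This is well defined because $a'$ and $b'$ agree on the overlap $V$. Let $\gamma$ be the class of $g$ in $\textrm{ZL}(U\cup W)=\mathcal Z(U\cup W)/\mathcal{LIP}(U\cup W)$. The restriction maps of the presheaf $\textrm{ZL}$ are induced by restricting functions, so $\gamma|_U$ is the class of $a'$, namely $\alpha$, and $\gamma|_W$ is the class of $b'$, namely $\beta$. If $V=\emptyset$, the gluing is trivial and no appeal to Theorem \ref{MainH1} is needed.

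The argument has no genuine obstacle, since all the difficulty lies in Theorem \ref{MainH1}. The only point needing care is the use of the hypothesis $\mathcal{LIP}(V)=\textrm{LIP}(V)$. It is what upgrades the locally LIP difference $a-b$ to a globally LIP function, and that is required because Theorem \ref{MainH1} applies only to LIP functions on $V$.
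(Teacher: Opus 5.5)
Your proposal is correct and follows essentially the same route as the paper. You represent $\alpha,\beta$ by integer-valued functions, use $\mathcal{LIP}(V)=\textrm{LIP}(V)$ to make the difference on $V$ a LIP function, split it via Theorem \ref{MainH1}, subtract the pieces so the representatives agree on $V$, and glue.
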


\begin{proof} Take $a\in \mathcal{Z}(U)$ and $b\in \mathcal{Z}(W)$ that are representatives of $\alpha$ and $\beta$ respectively. Then the restrictions of $a$ and $b$ to $V$ differ by some $c\in \mathcal{LIP}(V)=\textrm{LIP}(V).$ By the theorem above, $c$ can be written as a difference of restrictions of LIP functions on $U$ and $W$, call them $a_1$ and $b_1$. Then $a-a_1$ and $b-b_1$ are equal on $V$, so they can be put together to get one function in $\mathcal{Z}(U\cup W)$, whose class $\gamma \in \textrm{ZL}(U\cup W)$ satisfies the desired conditions.
\end{proof}

\begin{remark} In the proof of the above theorem only openness of $V$ was used, while $U$ and $W$ can be arbitrary. This makes the result potentially even more significant.
\end{remark}

\begin{theorem} \label{H1nest}Suppose $X$ is a nest, made out of the straw part $\{U_i\}_{i=1}^n$ and the core $\{V_j\}_{j=1}^m$. Then the $\check{\textrm{Ce}}$ch cohomology $\check{H}^1_{C} (X,\mathcal {LIP})=0,$ where $C$ is the cover of $X$ by $U_i$ and $V_j$.
\end{theorem}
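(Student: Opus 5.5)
The plan is to trivialize a given \v{C}ech $1$-cocycle one set of the cover at a time, using Theorem \ref{MainH1} as the gluing tool. To make each gluing step legal, I need to know that the relevant functions on intersections are genuinely LIP; this will come from Theorem \ref{Tmain+}, Lemma \ref{Lstar} and Theorem \ref{Nest}.

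First, some preliminary facts. Two intersecting basic sets intersect in a basic set, because the lcm of square-free moduli is square-free and coprimality is preserved. Hence every pairwise intersection of sets of the cover is almost basic. By Theorem \ref{Tmain+}, the components $g_{ab}$ of a cocycle (locally LIP on $A_a\cap A_b$) are therefore honest LIP functions. Moreover, all sets of the cover intersect pairwise: the $V_j$ by assumption, $U_i$ and $V_j$ by condition 1. So by Lemma \ref{Lconsistency}, any three sets of the cover that pairwise intersect have an almost basic triple intersection. The normalization I will maintain is the following: if the cocycle vanishes on all pairs inside a subfamily $\mathcal F$, I say it is trivialized on $Y=\bigcup\mathcal F$. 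Subtracting the coboundary of a cochain $\{h_a\}$, with $h_a$ LIP on $A_a$, does not change the class.

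Next I trivialize the core. Suppose the cocycle is already trivialized on $W_k=V_1\cup\dots\cup V_k$. By the cocycle condition, the functions $g_{a,k+1}$ on $V_a\cap V_{k+1}$ ($a\le k$) agree on overlaps, so they glue to a function $\varphi$ on $W_k\cap V_{k+1}$. The latter is a union of pairwise intersecting almost basic sets and $\varphi$ is LIP on each of them, so $\varphi$ is LIP by Lemma \ref{Lstar}. Theorem \ref{MainH1}, applied to $W_k$ and $V_{k+1}$, writes $\varphi=\alpha|-\beta|$ with $\alpha$ LIP on $W_k$ and $\beta$ LIP on $V_{k+1}$. I then subtract the coboundary of the cochain that is $\alpha$ restricted to each $V_a$ ($a\le k$) and $\beta$ on $V_{k+1}$ (with signs matching the convention $g_{ab}=h_b-h_a$). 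Since $\alpha$ is a single function on $W_k$, the pairs inside $W_k$ remain zero, and the new cocycle is trivialized on $W_{k+1}$. After $m$ steps it is trivialized on $V$.

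Then I add the straw sets one by one, with $Y_{i-1}=V\cup U_1\cup\dots\cup U_{i-1}$. As before, the $g_{a,U_i}$ glue to a locally LIP function $\varphi$ on $U_i\cap Y_{i-1}$. I then use Theorem \ref{MainH1} for $U_i$ and $Y_{i-1}$ and adjust by a single LIP function on $Y_{i-1}$ and one on $U_i$, exactly as in the previous step. The step I expect to be the main obstacle is proving that $\varphi$ is actually LIP, not just locally LIP, on $U_i\cap Y_{i-1}$; the example of Theorem \ref{TnonLIP} shows this can fail in general. My plan is to show that $U_i\cap Y_{i-1}$ is itself a nest, with core $\{U_i\cap V_j\}$ and straw $\{U_i\cap U_k\}_{k<i}$ (discarding empty pieces), and then invoke Theorem \ref{Nest}.

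Pairwise intersection of the new core, and condition 1, follow from the triple intersection remark above. Condition 2 is the delicate point. Given $x,y\in U_i\cap U_k$ and a prime $p\mid x-y$, condition 2 for the original nest gives $v\in V_j$ with $v\equiv x\pmod p$, but I need such a $v$ inside $U_i\cap V_j$. I would argue via the Chinese Remainder Theorem. $U_i\cap V_j$ is a basic progression $b+D\mathbb N_0$ with square-free $D$.
\begin{itemize}
\item If $p\nmid D$, every residue mod $p$ occurs in it.
\item If $p\mid D$, then $p$ divides the modulus of $U_i$ or of $V_j$. In either case the residue of $b$ mod $p$ is forced to agree with that of $x$ (both lie in $U_i$) or of $v$ (both lie in $V_j$). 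Since $v\equiv x\pmod p$, the residues coincide.
\end{itemize}
If this check goes through, Theorem \ref{Nest} gives $\varphi\in\mathrm{LIP}$. After $n$ straw steps the cocycle is identically zero, so $\check{H}^1_C(X,\mathcal{LIP})=0$.
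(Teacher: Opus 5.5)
Your proof is correct and is essentially the paper's. You trivialize on the core using Theorem \ref{Tmain+} and Lemma \ref{Lstar}, then add straw sets one at a time via Theorem \ref{MainH1}, with the theorem on nests ensuring that the glued locally LIP function on each intersection is actually LIP; the paper runs the same induction but lifts the cocycle to $\mathcal{Z}$ and glues sections of $\textrm{ZL}$ via Corollary \ref{GlueZL}, where you subtract \v{C}ech coboundaries directly.

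The one variation is that the paper glues $X_k$ with $U_{k-m+1}\cup V$, so the intersection is a nest whose core is $V$ itself and condition 2 is inherited verbatim, while you glue with $U_i$ alone and must re-check condition 2 for the core $\{U_i\cap V_j\}$. Your CRT check does this correctly, provided you run it in the basic set $<U_i>\cap<V_j>$ and use that each residue class there has positive density, since the sets are only almost basic. Also, straw sets need not meet each other, so ``all sets of the cover intersect pairwise'' is false as stated, but you only use the correctly qualified triple-intersection fact.
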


\begin{proof} Every element of  $\check{H}^1_{C} (X,\mathcal {LIP})$ is a collection of locally LIP (therefore, LIP) functions on the intersections of two of the sets that make up $X$, that satisfy the cocycle conditions on the triple intersections. Because $\check{H}^1_{C} (X,\mathcal {Z})=0,$  one can get a system of functions $F_i: U_i \to \mathbb Z$ and $G_j : V_j\to \mathbb Z$ such that the difference of their restrictions to the intersections of any two sets is equal to the original system. Equivalently, we have a collection of sections $f_i\in \textrm{ZL}(U_i)$ and $g_j\in \textrm{ZL}(V_j)$ that match on all intersections. To show that our element in $\check{H}^1_{C} (X,\mathcal {LIP})$ is zero, is equivalent to showing that there are some LIP functions $\tilde{F_i}\in \textrm{LIP}(U_i)$ and   $\tilde{G_j}\in \textrm{LIP}(V_j)$ such that  $F_i-\tilde{F_i}$ and $G_j-\tilde{G_j}$ match on all intersections. In other words, we need to show that the element $s\in \mathcal{ZL}(X),$ obtained by gluing together all $f_i$ and $g_j,$ actually belongs to $\textrm{ZL}(X)$.

 Define an increasing collection of sets $X_1,X_2,\ldots , X_{n+m}$ as follows. For $k\leq m$ we take $X_k=\bigcup_{j=1}^k V_j$. Then for $m+1\leq k \leq n+m,$ we take $X_k=V \cup (\bigcup_{i=1}^{k-m} U_i)$. Note that all $X_k$ are nests, and $X_{n+m}=X$. We are going to show, step by step, that the restriction of $s$ to $X_k$ belongs to $\textrm{ZL}(X_k)$ for $k=1,2,\ldots, n+m$. 
 
We first apply Corollary \ref{GlueZL} repeatedly for $k=1,\ldots, m-1$ to $X_k$ and $V_{k+1}$ to show that the restriction of $s$ to $V_m=V$ is in  $\textrm{ZL}(V)$; call it $g$. Next, for each $i$ from $1$ to $n$ the elements $f_i$ and $g$ form an element in $\textrm{ZL}(U_i\cup V)$. For $k\geq m$,  $X_{k} \cap (U_{k-m+1}\cup V) = (\bigcup_{i=1}^{k-m} (U_i\cap U_{k-m+1})) \cup V$ is a nest. Because of Theorem \ref{Nest}, we can apply Corollary \ref{GlueZL} to $X_k$ and $U_{k-m+1}\cup V$, whose union is $V_{k+1}$. Doing this for $k$ increasing from $m$ to $n+m-1$ proves the result.
\end{proof}

\begin{theorem} \label{H1basic}Suppose $U$ is an almost basic set. Then $H^1(U,\mathcal{LIP})=0.$
\end{theorem}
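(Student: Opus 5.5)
By Lemma \ref{longexact}, $H^1(U,\mathcal{LIP})=0$ is equivalent to $\mathcal{ZL}(U)=\textrm{ZL}(U)$. So the plan is to take an arbitrary $s\in\mathcal{ZL}(U)$ and show that it comes from a global integer-valued function, up to LIP corrections. As explained after Lemma \ref{longexact}, $s$ is represented by a cover of $U$ by basic sets $U_\alpha\subseteq U$ together with functions $f_\alpha\colon U_\alpha\to\mathbb Z$ such that $f_\alpha-f_\beta$ is LIP on each nonempty $U_\alpha\cap U_\beta$. We must produce $f\colon U\to\mathbb Z$ with $f-f_\alpha$ LIP on every $U_\alpha$.

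The first step is a finiteness reduction. Whether $f-f_\alpha$ is LIP on $U_\alpha$ is decided on finite subsets, but the cover may be infinite, so a compactness-type argument is needed.
\begin{itemize}
\item I would first show that $U$ is covered, up to the structure we need, by a \emph{finite} nest.
\item Write $<U>=a+d\mathbb N_0$ and choose finitely many basic sets from the cover, say $V_1,\dots,V_m$, that pairwise intersect. One option is basic neighborhoods of points of a common residue class modulo a suitable modulus; Lemma \ref{Lconsistency} shows pairwise intersection is easy to arrange by using neighborhoods $v+d_v\mathbb N_0$ with $v$ chosen divisible by the relevant primes, exactly as in the proof of Theorem \ref{Tmain+}.
\item These $V_j$ form a core. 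Condition 2) of Theorem \ref{Nest} then requires, for each prime $p$ dividing a difference of two points of $U$, a core point in the same class mod $p$. Since every point of $U$ is coprime to $d$, and the core can be chosen to hit every invertible residue class modulo $p$ for the finitely many primes dividing $d$ (for other $p$ the core already meets every class), condition 2) should hold.
\end{itemize}

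The second step handles the remaining points. Every other $x\in U$ lies in some $U_\alpha$. I would refine so that each such basic set meets every $V_j$, and then the whole of $U$ becomes a nest $X$ with core $\{V_j\}$ and possibly infinitely many straw pieces. Theorem \ref{Nest} gives $\mathcal{LIP}=\textrm{LIP}$ on any finite subnest. The gluing argument of Theorem \ref{H1nest} then shows that $s$ restricted to any finite subnest lies in $\textrm{ZL}$: first glue over the core using Corollary \ref{GlueZL}, then add straw pieces one at a time. To pass to the infinite union, I would add straw pieces $U_1,U_2,\dots$ successively. At each stage Corollary \ref{GlueZL} only modifies the new piece by a LIP function, via Theorem \ref{MainH1}, applied with $U$ equal to the previously glued set and $W=U_{k}\cup V$. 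The worry is that the global representative might change on old pieces. To avoid this, I would fix the representative on the already-glued set and correct only on the new piece: take the difference $c$ on the intersection and use Theorem \ref{MainH1} to write $c=a_1-b_1$. The correction $a_1$ on the old set is LIP, so the old representative changes only by a LIP function. This preserves the class but changes values, so convergence needs control.

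I expect the main obstacle to be this passage from finite to infinite unions. The cleanest fix I would try first is to arrange, using the product-sum flexibility in the proof of Theorem \ref{MainH1}, that the correction $a_1$ vanishes on the first $N_k$ points of the old set, with $N_k\to\infty$. This is possible by absorbing finitely many values into $b_1$ through interpolation on $W$. The representatives then stabilize pointwise to a limit $f$. On any finite subset, $f$ agrees with some finite-stage representative, and since LIP is a condition on finite subsets, $f-f_\alpha$ is LIP on every $U_\alpha$. Alternatively, if a finite subcover can be extracted, because the nest property lets any circuit be moved into finitely many pieces, then Theorem \ref{H1nest} applies directly and this step disappears.
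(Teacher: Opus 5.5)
\textbf{There is a genuine gap: the fixed finite core cannot be arranged.} Your reduction to $\mathcal{ZL}(U)=\textrm{ZL}(U)$ is sound. So is your final stabilization device: subtract an integer polynomial so that successive corrections vanish on longer and longer initial segments, then take the pointwise limit. This matches what the paper does.

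The gap is the middle step. You want all of $U$ to be one nest with a fixed finite core $V_1,\dots,V_m$ and infinitely many straw pieces, and this cannot be arranged in general. Refining does not help, because shrinking a set can only destroy intersections, never create them. A finite core involves finitely many primes. If $p\nmid d$ divides the difference $d_1$ of $V_1$, then $V_1$ lies in a single class $v_1 \bmod p$. Any cover set containing a point $x\not\equiv v_1\pmod p$ whose difference is divisible by $p$ then misses $V_1$, and so do all its subsets.

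For fine covers such points are unavoidable. Take $U=\mathbb N$ covered by the sets $y+D_y\mathbb N_0$, where $D_y$ is the product of the primes $\le y+5$ not dividing $y$. Choose an odd $p$ dividing $d_1$. A density count, using $\sum_y 1/D_y<1/10$, shows that for any finite pairwise-intersecting core, most $x$ in a class $s\not\equiv 0,v_1 \pmod p$ lie outside the core and in no cover set meeting $V_1$.

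The same lack of compactness defeats your fallback. In a finite cover of $U$ by basic sets, the points of $U$ divisible by every prime $\nmid d$ occurring in the differences can only lie in a piece whose difference is exactly $d$. So a finite subcover does not exist in general.

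Your check of condition 2) for a nest is also wrong as stated. A core does not ``already'' meet every class modulo $p\nmid d$. If $p$ divides some core difference, that piece sits in one class mod $p$, and you need another core piece whose difference is prime to $p$.

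\textbf{The missing idea: re-choose the core as the straw grows.} This means working with an increasing sequence of finite nests rather than one infinite nest. Your device of picking a point divisible by the relevant primes is the right tool, but it handles only finitely many primes at a time.

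The paper orders $U=\{u_1,u_2,\dots\}$ and shrinks the cover so that $u_i\in U_i$ and $u_j\notin U_i$ for $j<i$. It then builds $N_1\subseteq N_2\subseteq\cdots$ with $\bigcup N_n=U$, as follows.
\begin{itemize}
\item The straw of $N_{n+1}$ consists of all pieces of $N_n$, old core included, plus the first unused $U_i$.
\item Its core is the pair of cover sets $U_{k_1},U_{k_2}$ containing $k_1,k_2\in U$.
\item $k_1$ is divisible by every prime $p\nmid d$ that divides a difference of the current pieces.
\item $k_2$ is divisible by every prime $p\nmid d$ dividing $k_1$ or the difference of $U_{k_1}$.
\end{itemize}
Each point of an almost basic set is coprime to that set's difference. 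Hence a prime $p\nmid d$ divides at most one of the two core differences, and never one shared with a straw piece. This gives both nest conditions.

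Theorem~\ref{H1nest} then gives $s|_{N_n}\in\textrm{ZL}(N_n)$. Since $\mathcal{LIP}(N_n)=\textrm{LIP}(N_n)$, consecutive representatives differ on $N_n$ by a LIP function. Your interpolation trick makes that difference vanish on $u_1,\dots,u_n$, so the representatives converge pointwise to the required global function.
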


\begin{proof} From Lemma \ref{longexact} it is enough to show that every section $s\in \mathcal{ZL}(U)$ is in $\textrm{ZL}(U)=\mathcal{Z(U)}/\textrm{LIP}(U).$ By the definition of sheafification, $s$ comes from some open cover $\{U_i\}_{i=1}^{\infty}$ of $U$. We can assume that all $U_i$ are almost basic sets. Moreover, for $U=\{u_1,u_2 ,\dots\}$ we can refine this cover so that each $U_i$ contains $u_i$ and does not contain any $u_j$ for $j<i.$ 

We will construct an increasing sequence of nests $N_1\subseteq N_2\subseteq \dots$ that are made out of the sets from the given cover $\{U_i\}$  so that $\bigcup N_i=U$. This is done inductively as follows. For $n=1$ we take $N_1=U_1.$ Then for each $n$ we first consider all sets that $N_n$ is made out of, add the first unused $U_i$ to this collection, and consider this to be the straw part of the nest $N_{n+1}$. Then we add two more sets from the collection $\{U_i\}$ to create a nest as follows. We first consider all primes that divide any of the common differences of the open sets in our collection, except the primes that divide the common difference of $U$. We take an element $k_1$ in $U$ that is divisible by all of them and take $V_1$ to be $U_{k_1}.$ Note that $V_1$ contains $k_1$ and  intersects with all $U_i$ from our collection. Then we take $k_2$ in $U$ to be divisible by all primes dividing $k_1$ and the common difference of $U_{k_1},$ but not the common difference of $U.$ We take $V_2=U_{k_2}.$ The resulting collection of the sets, with $V=V_1\cup V_2$ is going to be a nest that we will call $N_{n+1}$. Indeed, the intersection properties of the nest follow from the construction. And the second condition follows from the fact that every prime that divides common differences of both $V_1$ and $V_2$ must divide the common difference of $U$.

From Theorem \ref{H1nest}, for every $k$ we can find LIP functions on the open sets $U_i$ that make the nest $N_k$ so that their differences restrict to the given elements on the intersections. When we go from $N_k$ to $N_{k+1},$ for a given $U_i$ that is already part of $N_k$, this function may need to be changed. However, we will prove that these changes can be made to not affect the values of the functions for all $x\leq k$ (note that $N_k$ contains all numbers from $1$ to $k$.) Indeed, the difference between the restriction to $N_k$ of the element in $\textrm{ZL}(N_{k+1})$ and the element in $\textrm{ZL}(N_k)$ is zero. So the difference of representatives is some LIP function $\mu_k\in \textrm{LIP}(N_k)$. Note that the system of functions for $N_{k+1}$ is defined up to a LIP function. So by subtracting a suitable integer polynomial, we can $\mu_k$ be zero on $\{1,2,\ldots, k\}.$

Putting all these systems of functions together for $k$ increasing to infinity, we get a collection of LIP functions $f_i\in \textrm{LIP}(U_i)$ with differences being the prescribed LIP functions on the intersection, proving that our element $s$ is in $\textrm{ZL}(U)$.
\end{proof}

The next result is a standard corollary; see, for example, the proof of Theorem 7.8.5 in \cite{Taylor}. We give an argument purely to keep the text more self-contained.
\begin{corollary}\label{H1Cech=H1} Suppose $U$ is open, and $C$ is its cover by almost basic sets. Then $H^1(U,\mathcal{LIP})\cong \check{H}^1_C(U,\mathcal{LIP}).$ 
\end{corollary}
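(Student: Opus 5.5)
We follow the standard comparison between \v{C}ech and derived-functor cohomology in degree one, using the exact sequence $0\to\mathcal{LIP}\to\mathcal Z\to\mathcal{ZL}\to 0$ from Lemma \ref{longexact} together with Theorem \ref{H1basic}. Write $C=\{C_\alpha\}$. There is always a natural map $\check{H}^1_C(U,\mathcal{LIP})\to H^1(U,\mathcal{LIP})$. By Lemma \ref{longexact}, part 2, we have $H^1(U,\mathcal{LIP})\cong\mathcal{ZL}(U)/\textrm{ZL}(U)$, so we make this map concrete on that quotient and show it is bijective.

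\textbf{Defining the map.} Take a \v{C}ech $1$-cocycle $(h_{\alpha\beta})$ with $h_{\alpha\beta}\in\mathcal{LIP}(C_\alpha\cap C_\beta)$. Since $\mathcal Z$ is flasque, its \v{C}ech $H^1$ vanishes; concretely, fix an index $\alpha_0$ and choose values pointwise to obtain $F_\alpha\in\mathcal Z(C_\alpha)$ with $F_\alpha-F_\beta=h_{\alpha\beta}$ on every $C_\alpha\cap C_\beta$. The classes of the $F_\alpha$ in $\textrm{ZL}(C_\alpha)$ agree on overlaps, so they glue to a section $s\in\mathcal{ZL}(U)$. The $F_\alpha$ are unique up to a global function in $\mathcal Z(U)$, which changes $s$ only by an element of $\textrm{ZL}(U)$. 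A coboundary $h_{\alpha\beta}=g_\alpha-g_\beta$ allows the choice $F_\alpha=g_\alpha$, giving $s=0$. Hence we get a well-defined homomorphism $\check{H}^1_C\to\mathcal{ZL}(U)/\textrm{ZL}(U)$.

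\textbf{Injectivity.} Suppose $s\in\textrm{ZL}(U)$, so $s$ is represented by some $F\in\mathcal Z(U)$. Then $F_\alpha-F|_{C_\alpha}=:g_\alpha$ is locally LIP on $C_\alpha$, and $h_{\alpha\beta}=g_\alpha-g_\beta$. Thus the cocycle is a coboundary.

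\textbf{Surjectivity.} This is the step where Theorem \ref{H1basic} enters, and it is the main obstacle. Take $s\in\mathcal{ZL}(U)$. Its restriction to each $C_\alpha$ lies in $\mathcal{ZL}(C_\alpha)$. Since $C_\alpha$ is almost basic, $H^1(C_\alpha,\mathcal{LIP})=0$, and Lemma \ref{longexact} gives $s|_{C_\alpha}\in\textrm{ZL}(C_\alpha)$. So $s|_{C_\alpha}$ is represented by some $F_\alpha\in\mathcal Z(C_\alpha)$. On $C_\alpha\cap C_\beta$ the functions $F_\alpha$ and $F_\beta$ have the same image in $\mathcal{ZL}(C_\alpha\cap C_\beta)$. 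By part 1 of Lemma \ref{longexact}, $\textrm{ZL}$ embeds into $\mathcal{ZL}$, so their difference $h_{\alpha\beta}$ is locally LIP on $C_\alpha\cap C_\beta$. These $h_{\alpha\beta}$ satisfy the cocycle condition automatically, and by construction they map to $s$.

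The only subtle point is this use of the embedding $\textrm{ZL}\hookrightarrow\mathcal{ZL}$ on intersections. It holds for arbitrary open sets, including intersections $C_\alpha\cap C_\beta$ that need not be almost basic, because the kernel of $\mathcal Z\to\mathcal{ZL}$ is exactly $\mathcal{LIP}$, which is a sheaf. Everything else is routine diagram checking.
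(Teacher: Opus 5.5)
Your proposal is correct and follows essentially the same route as the paper. You identify $H^1(U,\mathcal{LIP})$ with $\mathcal{ZL}(U)/\textrm{ZL}(U)$ via Lemma \ref{longexact}, define the map by lifting a \v{C}ech cocycle to $\mathbb Z$-valued functions $F_\alpha$ and gluing their classes, and invoke Theorem \ref{H1basic} on each almost basic $C_\alpha$ for surjectivity. Your injectivity step, subtracting a global representative $F$ so that each $F_\alpha-F|_{C_\alpha}$ is locally LIP, is actually stated more precisely than the paper's remark that the $F_i$ ``can be chosen to be zero.''
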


\begin{proof} By Lemma \ref{longexact}, $H^1(U,\mathcal{LIP})$ is isomorphic to $\mathcal{ZL}(U) / \textrm{ZL}(U).$ Under this isomorphism, the natural homomorphism $\check{H}^1_C(U,\mathcal{LIP}) \to H^1(U,\mathcal{LIP})$ can be described as follows. For every element of $\check{H}^1_C(U,\mathcal{LIP})$ that is represented by $\{f_{ij} \in \textrm{LIP}(U_i\cap U_j)\}$  we can find $\{F_i: U_i\to \mathbb Z\}$ such that $f_{ij}=F_i-F_j.$ By gluing together the classes of $F_i$ in $\mathcal{Z}(U_i)/\textrm{LIP}(U_i)=\textrm{ZL}(U_i),$ we get a section of $\mathcal{ZL}(U).$ The difference of two systems of $\{F_i\}$ for the same $\{f_{ij}\}$ is a system of arbitrary $\mathbb Z$-valued functions on $U_i$ that match on all intersections, so they are restrictions of a function from $U$ to $\mathbb Z.$ Changing $\{f_{ij}\}$ to a different system representing the same class in $\check{H}^1_C(U,\mathcal{LIP})$ amounts to changing $\{F_i\}$ by adding to each of them a LIP function on $U_i.$ Thus, the map is well defined and, clearly, it is a homomorphism of abelian groups.

To show that this map is an isomorphism, we first prove that it is injective. Suppose we have a system $\{f_{ij}\}$ such that our construction gives zero. Since we have freedom in choosing $\{F_i\}$ up to the restrictions of a common $\mathbb Z$-valued function on $U,$ we can choose all $\{F_i\}$ to be zero. Thus, the original class is zero. 

The surjectivity is a bit more involved. Suppose we have an element in $\mathcal{ZL}(U)$. Then its restrictions to all $U_i$ are in $\mathcal{ZL}(U_i)$ that coincides with $\textrm{ZL}(U_i)$ by Theorem \ref{H1basic}. Note that for all $i$ and $j$ these restrictions restrict to the same on $U_i\cap U_j.$ We can lift them to $F_i: U_i \to \mathbb Z$; note that $F_i-F_j\in \textrm{LIP}(U_i\cap U_j).$ Calling these functions $f_{ij},$ we get a system of LIP functions that satisfy the cocycle condition, which gives an element in $\check{H}^1_C(U,\mathcal{LIP}) $ that we were looking for.
\end{proof}

\begin{corollary} Suppose $X$ is a nest. Then $H^1(X,\mathcal{LIP})=0.$
\end{corollary}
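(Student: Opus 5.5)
The plan is to combine Corollary \ref{H1Cech=H1} with Theorem \ref{H1nest}. Let $X$ be a nest made of the straw part $\{U_i\}_{i=1}^n$ and the core $\{V_j\}_{j=1}^m$. The sets $U_i$ and $V_j$ are almost basic, so $C=\{U_1,\dots,U_n,V_1,\dots,V_m\}$ is a cover of the open set $X$ by almost basic sets.

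First, Corollary \ref{H1Cech=H1} applied to $X$ and $C$ gives
\[
H^1(X,\mathcal{LIP})\cong \check{H}^1_C(X,\mathcal{LIP}).
\]
Second, Theorem \ref{H1nest} states that this \v{C}ech group vanishes for exactly this cover. Combining the two gives $H^1(X,\mathcal{LIP})=0$.

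The degenerate case is when $X$ is an almost star-like set, regarded as a nest with $n=0$. There $X=V_1\cup\dots\cup V_m$ with pairwise intersecting almost basic $V_j$, and the same argument applies. The proof of Theorem \ref{H1nest} in this case consists only of the first stage: repeated use of Corollary \ref{GlueZL} on $X_k=\bigcup_{j\le k}V_j$ and $V_{k+1}$. To use Corollary \ref{GlueZL} I need $\mathcal{LIP}(X_k\cap V_{k+1})=\textrm{LIP}(X_k\cap V_{k+1})$. This holds because $X_k\cap V_{k+1}=\bigcup_{j\le k}(V_j\cap V_{k+1})$. By Lemma \ref{Lconsistency} the pieces $V_j\cap V_{k+1}$ are again almost basic and pairwise intersecting, so the intersection is almost star-like. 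On such a set locally LIP functions are LIP by Theorem \ref{Tmain+} and Lemma \ref{Lstar}.

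The main point to watch is that the hypotheses of Corollary \ref{H1Cech=H1} really hold. The corollary needs the members of the cover to be almost basic. Its surjectivity step needs $\mathcal{ZL}(U_i)=\textrm{ZL}(U_i)$ for each member, which is Theorem \ref{H1basic}. Both are met here, since by definition the nest is built from almost basic sets. So the argument is a two-line combination, and the only real content is checking that the degenerate $n=0$ case is covered.
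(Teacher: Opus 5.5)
Your proposal is correct and is exactly the paper's argument: the corollary is stated there as a direct consequence of Theorem \ref{H1nest} and Corollary \ref{H1Cech=H1}, applied to the cover of the nest by its straw and core sets. Your extra check of the degenerate $n=0$ (almost star-like) case is sound and fills in a step the paper leaves implicit; that check uses Lemma \ref{Lconsistency}, Theorem \ref{Tmain+} and Lemma \ref{Lstar} to justify the hypothesis of Corollary \ref{GlueZL}.
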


\begin{proof} This is a direct consequence of Theorem \ref{H1nest} and Corollary \ref{H1Cech=H1}.
\end{proof}

This applies, in particular, to star-like sets. It can also be generalized to sets considered  in Theorem \ref{Infstar} as follows.

\begin{theorem} \label{H1Infstar} Suppose $U$ is an open set and $a\in U.$ Denote by $U_a$ the union of all almost basic subsets of $U$ that contain $a.$ Then $H^1(U_a,\mathcal{LIP})=0$.
\end{theorem}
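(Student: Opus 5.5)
We follow the proof of Theorem \ref{H1basic}, with $U_a$ in place of the almost basic set $U$. By Lemma \ref{longexact}, it suffices to show that every section $s\in\mathcal{ZL}(U_a)$ lies in $\textrm{ZL}(U_a)$.

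\emph{Step 1: a good cover.} By sheafification, $s$ comes from some open cover of $U_a$. Since $U_a$ is a union of almost basic sets containing $a$, and intersections of almost basic sets are almost basic or empty, we refine this cover to a countable cover $\{W_i\}$ with the following properties: each $W_i$ is almost basic; $W_1\ni a$; and each $W_i$ is contained in some almost basic $A_i\subseteq U_a$ with $a\in A_i$. We then list $U_a=\{u_1,u_2,\dots\}$.

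\emph{Step 2: exhaust $U_a$ by nests.} We build an increasing sequence of nests $N_1\subseteq N_2\subseteq\cdots$ with $\bigcup N_k=U_a$ and $\{1,\dots,k\}\cap U_a\subseteq N_k$. Each $N_k$ is made out of sets of the cover, together with auxiliary almost basic subsets of $U_a$ on which $s$ is still represented by a restriction.

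To go from $N_k$ to $N_{k+1}$, we put all current sets, together with the first unused $W_i$, into the straw part. We then add a core $V_1\cup V_2$ of small almost basic neighborhoods of carefully chosen points. As in Theorem \ref{H1basic}, we choose $k_1$ divisible by all primes dividing the common differences involved. The key difference from Theorem \ref{H1basic} is that we must now exclude the primes dividing the common differences of the relevant sets $A_i\ni a$, and require $k_1\equiv a$ modulo those primes. The point $k_1$ must lie in $U_a$. To arrange this, we take $k_1$ inside $\langle \bigcup A_i\rangle$ intersected with a suitable progression. By Lemma \ref{Lconsistency}, the $A_i$ through $a$ pairwise intersect, so their intersection contains a basic set through $a$; this intersection is where we place $k_1$ and $k_2$. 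We then verify condition 2) of the nest definition: for $x,y$ in the straw part and $p\mid x-y$, either $p$ is one of the primes for which $k_1$ or $k_2$ is $\equiv 0$ and avoids the differences, or $p$ divides the common difference of some $A_i$. In the latter case, the points of $A_i$ and the core are all $\equiv a \pmod p$, and we use the Proximity Principle (Lemma \ref{PP}) to get the needed $v$.

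\emph{Step 3: pass to the limit.} By Theorem \ref{H1nest} and Corollary \ref{H1Cech=H1}, the restriction of $s$ to each $N_k$ lies in $\textrm{ZL}(N_k)$, since $\mathcal{LIP}(N_k)=\textrm{LIP}(N_k)$ by Theorem \ref{Nest}. Comparing the representatives on $N_k$ and $N_{k+1}$ gives a difference $\mu_k\in\textrm{LIP}(N_k)$. We subtract an integer polynomial interpolating $\mu_k$ on $\{1,\dots,k\}\cap N_k$, which is possible by Lemma 2. After this adjustment, the values at $x\le k$ stabilize, and the limit function $F$ on $U_a$ has $F-F_i$ LIP on each $W_i$. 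Therefore $s\in\textrm{ZL}(U_a)$.

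The main obstacle is Step 2: choosing the core points inside $U_a$, rather than inside a single basic set, while still satisfying condition 2) of a nest. We would handle the primes dividing the common differences of the $A_i$ via the common point $a$ as described above. If that fails, the fallback is to take the core to be a star-like neighborhood of $a$ itself, enlarged by Lemma \ref{Lstar}.
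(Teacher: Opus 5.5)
Your Step 3 is the right limiting argument; it is what the paper means by ``proceed like in the end of the proof of Theorem~\ref{H1basic}''. Step 2, however, fails, and not only in its details.

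Your recipe forces $k_1,k_2\equiv a \pmod p$ for every prime $p$ dividing some $d_{A_i}$. The cover sets at $k_1,k_2$ may have common differences divisible by $p$, so the whole core can lie in $a+p\mathbb Z$. Meanwhile a straw set inside another $A_j$ with $p\nmid d_{A_j}$ can lie in a different residue class mod $p$. Then conditions 1) and 2) of a nest both fail. The Proximity Principle (Lemma~\ref{PP}) does not rescue 2): it only lifts an existing congruence mod $p$ to one mod $p^k$, and never supplies a core point in a new residue class mod $p$. Note also that 2) concerns every straw point $x$, not only those in $A_i$.

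More seriously, the design itself, in which every cover set eventually enters the straw part of a single nest, is impossible in general. Take $U=(1+3\mathbb N_0)\cup(1+5\mathbb N_0)$ and $a=1$, so $U_a=U$. Suppose $s$ is given on a cover whose only members containing $11$ and $7$ are $W=11+210\mathbb N_0\subseteq 1+5\mathbb N_0$ and $W'=7+30\mathbb N_0\subseteq 1+3\mathbb N_0$. Refining only replaces these by subsets lying in the same residue classes. Since $W\subseteq (2+3\mathbb Z)\cap(1+5\mathbb Z)$ and $W'\subseteq (1+3\mathbb Z)\cap(2+5\mathbb Z)$, any almost basic set meeting both has common difference prime to $15$. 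It therefore contains multiples of $15$, which are not in $U$. Because $W\cap W'=\emptyset$, no nest inside $U_a$ can be built from both $W$ and $W'$, whatever core you choose. This includes your star-like fallback.

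The missing observation is that Step 2 is unnecessary. In Step 3 you already invoke Theorem~\ref{H1nest} together with Corollary~\ref{H1Cech=H1}, that is, $H^1(N,\mathcal{LIP})=0$ for every nest $N$. Once Theorem~\ref{H1basic} is available, this does not require $N$ to be assembled from the sets of the cover on which $s$ is given. That requirement was imposed in the proof of Theorem~\ref{H1basic} only because Corollary~\ref{H1Cech=H1} depends on that theorem.

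So, as in the paper, enumerate $U_a=\{x_1,x_2,\dots\}$, choose almost basic $A_j\subseteq U$ with $a,x_j\in A_j$, and set $N_k=A_1\cup\dots\cup A_k$. All these sets contain $a$, so each $N_k$ is almost star-like, hence a nest. Therefore $\mathcal{LIP}(N_k)=\textrm{LIP}(N_k)$ and $\mathcal{ZL}(N_k)=\textrm{ZL}(N_k)$, and $\bigcup_k N_k=U_a$.

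Now run your Step 3 with representatives $G_k$ of $s|_{N_k}$. Normalize so that $G_{k+1}|_{N_k}-G_k$ vanishes on $\{1,\dots,k\}\cap N_k$. The limit $F$ then satisfies $F|_{N_k}-G_k\in\textrm{LIP}(N_k)$, since on any finite subset only finitely many corrections are nonzero. Hence $F$ represents $s$ on the open cover $\{N_k\}$, and $s\in\textrm{ZL}(U_a)$.
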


\begin{proof} Note that $U_a$ is a union of an increasing sequence of nests so we can proceed like in the end of the proof of Theorem \ref{H1basic}. 
\end{proof}

\begin{definition} We call an open set $X$ tree-like, if it can be covered by a finite or infinite collection of almost basic sets $U_i,$ such that the nerve of this covering is a tree. That is, the graph whose vertices are $U_i$ and two vertices are connected iff the sets have non-empty intersection, is a tree.
\end{definition}

\begin{theorem} Suppose $X$ is a tree-like set. Then $H^1(X,\mathcal{LIP})=0$.
\end{theorem}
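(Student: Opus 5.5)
The plan is to use Corollary \ref{H1Cech=H1}, which reduces the statement to \v{C}ech cohomology. Let $C=\{U_i\}$ be the tree-like cover of $X$ by almost basic sets. Then $H^1(X,\mathcal{LIP})\cong \check{H}^1_C(X,\mathcal{LIP})$, so it is enough to show that every \v{C}ech 1-cocycle $\{f_{ij}\}$ with $f_{ij}\in\mathcal{LIP}(U_i\cap U_j)$ is a coboundary. I would begin by observing that $f_{ij}$ is nonzero only for adjacent vertices of the nerve; for all other pairs the intersection is empty.

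The first point to settle is that the nerve being a tree forces every nonempty triple intersection to be degenerate. If $U_i,U_j,U_k$ were pairwise intersecting with $i,j,k$ distinct, they would form a triangle in the nerve, which a tree cannot contain. So the cocycle condition imposes no constraint, and a cocycle is just an arbitrary assignment $f_{ij}=-f_{ji}$ on the edges.

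Next I need each $f_{ij}$ to be a genuine LIP function on $U_i\cap U_j$. The intersection of two almost basic sets is almost basic: intersecting the basic sets $<U_i>$ and $<U_j>$ gives an arithmetic progression with square-free difference coprime to its elements, and we remove a closed density-zero set. Hence Theorem \ref{Tmain+} gives $\mathcal{LIP}(U_i\cap U_j)=\textrm{LIP}(U_i\cap U_j)$.

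Now I build LIP functions $F_i\in\textrm{LIP}(U_i)$ with $F_i-F_j=f_{ij}$ on every edge, working outward along the tree. Since $X$ is open, I may work on each connected component of the nerve separately, so I assume the nerve is connected. I fix a root $U_{i_0}$ with $F_{i_0}=0$ and enumerate the vertices so that each vertex after the root is adjacent to exactly one earlier vertex, its parent; this is possible for a countable tree by a breadth-first enumeration. When I reach $U_j$ with parent $U_i$, the function $F_i$ is already defined, and $F_i-f_{ij}$ is LIP on $U_i\cap U_j$. The key tool is Theorem \ref{MainH1}, applied with $U=U_i$, $W=U_j$, $V=U_i\cap U_j$: it writes $F_i-f_{ij}=a-b$ with $a\in\textrm{LIP}(U_i)$ and $b\in\textrm{LIP}(U_j)$.

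The step I expect to be the main obstacle is that I may not modify $F_i$, since it has already been matched with its own parent and siblings. This is handled by the weaker input $V\subseteq U$ in Theorem \ref{MainH1}, or more directly by the fact that LIP functions on an open subset extend. I would apply Theorem \ref{MainH1} with $U$ replaced by $U_j$ and $W$ replaced by $U_i$, obtaining $F_i-f_{ij}=b-a$ with $b\in\textrm{LIP}(U_j)$ and $a\in\textrm{LIP}(U_i)$. If I can arrange $a=0$, I am done, and this is the same as extending a LIP function from $U_i\cap U_j$ to $U_j$. Because $U_i\cap U_j$ is almost basic inside the almost basic set $U_j$, I expect the product-sum representation to give this extension directly. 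I would enumerate $U_j$ so that it begins with elements of $U_i\cap U_j$, taking the $x_k$ as those elements. Then a function on $U_i\cap U_j$ given by $\sum a_k\prod_{l\le k}(x-x_l)$ converges on all of $U_j$ in the appropriate sense, because every $y\in U_j$ is eventually a root of the products: the intersection is infinite and contains elements congruent to $y$ modulo any power. I would check this using the Proximity Principle (Lemma \ref{PP}) and the Chinese Remainder Theorem. If this extension step fails, the fallback is the refinement trick from the end of Theorem \ref{H1basic}: allow $F_i$ to be changed by a LIP function vanishing on an initial segment, and take the limit.

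With the extension in hand, I set $F_j:=b$, so that $F_i-F_j=f_{ij}$ on $U_i\cap U_j$. Since $U_j$ meets no other earlier vertex, because the tree has no cycles, no other constraint on $F_j$ is involved. Carrying this out for all vertices produces a global choice $\{F_i\}$, so the cocycle is a coboundary. Hence $\check{H}^1_C=0$, and therefore $H^1(X,\mathcal{LIP})=0$.
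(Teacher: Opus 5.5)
\textbf{There is a genuine gap in the extension step.} For each new vertex $U_j$ with parent $U_i$ you need some $F_j\in\textrm{LIP}(U_j)$ that equals $F_i-f_{ij}$ on $V=U_i\cap U_j$, with $F_i$ frozen. You propose to get this by extending a LIP function from $V$ to $U_j$ via its product-sum. That extension claim is false.

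A point $y\in U_j\setminus V$ is never a root of $\prod_{l\le k}(x-x_l)$ when all $x_l\in V$, so the product-sum does not converge in $\mathbb Z$ at $y$. At best it converges $p$-adically for those $p$ with $V\cap(y+p\mathbb Z)\neq\emptyset$. That is not all $p$: take $V=1+6\mathbb N_0\subset 1+2\mathbb N_0$, $y=3$, $p=3$. Even where the local limits exist, they need not come from an integer.

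Here is an explicit counterexample. Take $V=1+2\mathbb N_0\subset U_j=\mathbb N$, $P_k(x)=\prod_{i=1}^k(x-2i+1)$, and $g=\sum a_kP_k\in\textrm{LIP}(V)$. The interpolation polynomial of an extension $G$ on $\{2,1,3,\dots,2k-1\}$ must be integral. This forces $G(2)\equiv\sum_{j<k}a_jP_j(2)\pmod{M_k}$ with $M_k=\prod_{i=1}^k|2i-3|$, for every $k$. Since $|P_k(2)|=M_k$ and $M_{k+1}/M_k=2k-1$ is odd, you can choose the $a_k$ inductively so that $2\sum_{j<k}a_jP_j(2)\equiv 1\pmod{M_k}$ for all $k$. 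Then every $M_k$ would divide the nonzero odd number $2G(2)-1$, which is impossible.

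So restriction $\textrm{LIP}(U_j)\to\textrm{LIP}(U_i\cap U_j)$ is not surjective in general. Your procedure already fails for the cover $\{1+2\mathbb N_0,\mathbb N\}$ if the root is $1+2\mathbb N_0$. The fallback does not help either. Changing $F_i$ alone by a LIP function on $U_i$ breaks $F_p-F_i=f_{pi}$ for the parent $U_p$, and repairing that is the same extension problem one level up.

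\textbf{The missing idea is to apply Theorem~\ref{MainH1} to the right pair of sets.} Apply it not to $U_i$ and $U_j$, but to the open set $A$ (the union of all previously placed sets) and $U_j$. Because $U_j$ meets only its parent, $A\cap U_j=U_i\cap U_j$, so you can write $F_i-f_{ij}=a-b$ with $a\in\textrm{LIP}(A)$ and $b\in\textrm{LIP}(U_j)$. Now replace every earlier $F_k$ by $F_k-a|_{U_k}$; this leaves all earlier differences unchanged. Then set $F_j=-b$, which gives $F_i-F_j=f_{ij}$ on $V$.

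This is exactly the paper's inductive step of adding a leaf $U_n$ to $U_1\cup\dots\cup U_{n-1}$. The paper phrases it through $H^1(U_1\cup\dots\cup U_{n-1},\mathcal{LIP})=0$, Theorem~\ref{H1basic} and Theorem~\ref{MainH1}.

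Because earlier functions are now modified at every step, an infinite tree needs the limiting argument from Theorem~\ref{H1basic}. Subtract from both $a$ and $b$ an integer polynomial that agrees with $a$ on $A\cap\{1,\dots,N\}$, with $N\to\infty$ along the steps. Each modification then vanishes on a growing initial segment, so the $F_k$ stabilize pointwise. The limits are LIP because being LIP is a condition on finite subsets.

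A minor point: an enumeration in which every vertex has exactly one earlier neighbour does exist, but it is not literally breadth-first when some vertex has infinite degree.
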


\begin{proof} An infinite tree can be expressed as a union of finite trees. So, as in the proof of Theorem \ref{H1basic}, it is enough to prove it for $X$ that are covered by finite collection of almost basic sets, whose nerve is a tree. For such coverings $X=U_1\cup\dots \cup U_n$ we use induction on $n$. We can always find an end element of the graph, call it $U_n,$ so $U_n$ intersects with exactly one of the other $U_i.$ The induction hypothesis for $U_1\cup\dots \cup U_{n-1}$ and Theorems  \ref{H1basic} and \ref{MainH1} imply that $H^1(U_1\cup\dots \cup U_n, \mathcal{LIP})=0.$
\end{proof}

\begin{remark} The set $X$ from Theorem \ref{TnonLIP} is tree-like. So are the more general sets $\mathbb N \setminus (2p)\mathbb N$ for odd primes $p$. Note that not all locally LIP functions on such sets are LIP, however the first cohomology for the sheaf $\mathcal{LIP}$ is trivial.
\end{remark}

The following example shows that $H^1$ is not always zero.
\begin{theorem} \label{TH1not0} Suppose $U=\mathbb N \setminus 3\mathbb N$ and $W=\mathbb N \setminus 5\mathbb N.$ Then not every locally LIP function on $V=U\cap W$ is a sum of the restrictions of locally LIP functions on $U$ and $W$. Therefore, $H^1(U\cup W, \mathcal{LIP}) \neq 0.$
\end{theorem}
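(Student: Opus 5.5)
The plan is to exhibit an explicit locally LIP function on $V$ and detect the obstruction by a parity argument on four well-chosen points. First I would record the structure of the three sets. $U=\mathbb N\setminus 3\mathbb N$ is the disjoint union of the basic sets $A_1=1+3\mathbb N_0$ and $A_2=2+3\mathbb N_0$. $W=\mathbb N\setminus 5\mathbb N$ is the disjoint union of the basic sets $B_j=j+5\mathbb N_0$, $j=1,\dots,4$. Finally $V$ is the disjoint union of the eight basic sets $C_{ij}=A_i\cap B_j$, which are residue classes modulo $15$.

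Since these pieces are disjoint open sets, a function on $U$ (resp. $W$, $V$) is locally LIP iff its restriction to each piece is locally LIP. By Theorem \ref{Tmain+}, this is the same as being LIP on each piece. So the question becomes whether every system $(h_{ij})$ with $h_{ij}\in\textrm{LIP}(C_{ij})$ can be written as $h_{ij}=F_i-G_j$ with $F_i\in\textrm{LIP}(A_i)$ and $G_j\in\textrm{LIP}(B_j)$.

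The candidate counterexample is $h=1$ on $C_{11}$ and $h=0$ on the other $C_{ij}$; this is clearly locally LIP on $V$. Suppose $h=F-G$ on $V$ with $F$ locally LIP on $U$ and $G$ locally LIP on $W$. Write $F_i=F|_{A_i}$ and $G_j=G|_{B_j}$; these are LIP. Choose odd points $x_{11}\in C_{11}$, $x_{12}\in C_{12}$, $x_{21}\in C_{21}$, $x_{22}\in C_{22}$, which is possible because each $C_{ij}$ is a progression with odd difference $15$. Expanding the alternating sum gives
$$h(x_{11})-h(x_{12})-h(x_{21})+h(x_{22}) = [F_1(x_{11})-F_1(x_{12})]+[F_2(x_{22})-F_2(x_{21})]-[G_1(x_{11})-G_1(x_{21})]-[G_2(x_{22})-G_2(x_{12})].$$
Each bracket is divisible by the difference of its two arguments, because the function in it is LIP (its degree-one interpolation polynomial is integral). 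All these differences are even, so the right side is even. The left side equals $1$, which is the contradiction.

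For the cohomological conclusion I would use the cover $C=\{A_1,A_2,B_1,\dots,B_4\}$ of $U\cup W$ by basic sets. Any triple intersection involves either two $A$'s or two $B$'s and is therefore empty, so the cocycle condition is vacuous. Hence $(h_{ij})$ with $h_{ij}=h|_{C_{ij}}$, which are LIP by Theorem \ref{Tmain+}, is a Čech $1$-cocycle. Its class vanishes exactly when $h_{ij}=F_i-G_j$ with each $F_i,G_j$ LIP, and we have just excluded this. So $\check H^1_C(U\cup W,\mathcal{LIP})\neq0$, and Corollary \ref{H1Cech=H1} transfers this to $H^1(U\cup W,\mathcal{LIP})\neq0$.

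The only real step is finding the obstruction, namely choosing the four points so that all four differences share the prime $2$, which divides neither $3$ nor $5$. After that, the bookkeeping of signs in the alternating sum is the one place to be careful.
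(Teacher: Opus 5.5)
Your proof is correct and is essentially the paper's argument: the paper uses the same function (equal to $1$ on the class $1 \bmod 15$ and $0$ elsewhere) and the same alternating sum, evaluated at the concrete odd points $1,7,11,17$. These points lie in your $C_{11},C_{12},C_{21},C_{22}$, and the paper groups the terms exactly as you do to get evenness for restrictions from $U$ and from $W$. The only difference is that the paper simply asserts $H^1(U\cup W,\mathcal{LIP})\neq 0$, whereas you justify it via the \v{C}ech $1$-cocycle on the cover $A_1,A_2,B_1,\dots,B_4$ (vacuous cocycle condition, since triple intersections are empty) together with Corollary \ref{H1Cech=H1}, which validly fills in that step.
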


\begin{proof} Because $U$ is a disjoint union of two basic sets, $1+3\mathbb N_0$ and $2+3\mathbb N_0,$ $\mathcal{LIP} (U)$ consists of functions that are separately LIP on each of these basic sets. Similarly, every locally LIP function on $W$ is LIP on each of the four sets $a+5\mathbb N_0,$ where $a\in \{1,2,3,4\}.$ Locally LIP functions on $V$ are made of $8$ LIP functions on $a+15\mathbb N,$ where $a\in \{1,2,4,7,8,11,13,14\}.$

For each $f\in \mathcal{LIP}(V)$ consider $D_f=f(1)-f(7)-f(11)+f(17).$ For a restriction of a locally LIP function on $U$ this equals $(f(1)-f(7))-(f(11)-f(17)).$ Because $1$ and $7$ belong to a basic set, on which $f$ is LIP, $f(1)-f(7)$ is even. Likewise, $f(11)-f(17)$ is even, so the whole  $D_f$ is even.  Similarly, for a restriction of a locally LIP function on $W$ we rewrite $D_f$ as $(f(1)-f(11))-(f(7)-f(17)),$ so it is even. Consider now a function $f\in \mathcal{LIP}(V)$ such that $f(x)=1$ if $x\equiv 1 \pmod {15}$ and $f(x)=0$ otherwise. For this function $D_f=1,$ so $f$ is not a sum of restrictions of locally LIP functions on $U$ and $W.$
\end{proof}

We end this section with a theorem about triviality of higher $\check{\textrm{C}}$ech cohomology for star-like sets.

\begin{theorem} \label{THkstar} Suppose $U=U_1\cup \ldots \cup U_n$ is an almost star-like set, and $C$ is its cover consisting of the sets $U_i,$ $i=1,\dots, n.$ Then $\check{H}^k_C(U,\mathcal{LIP})=0$ for all $k\geq 1$.
\end{theorem}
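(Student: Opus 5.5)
The plan is to realize the \v{C}ech complex of $\mathcal{LIP}$ for the cover $C$ as a complex of LIP-function groups and to kill it with a contracting homotopy. The first step is to record that every finite intersection $U_{i_0}\cap\dots\cap U_{i_k}$ is itself almost basic. Indeed, by Lemma \ref{Lconsistency} and the remark following Definition \ref{Dstar}, $<U_{i_0}>\cap\dots\cap<U_{i_k}>$ is a basic progression. Removing the union of the finitely many removed density-zero closed sets again leaves an almost basic set. Hence, by Theorem \ref{Tmain+}, all groups in the \v{C}ech complex are LIP groups $\textrm{LIP}(U_{i_0\dots i_k})$.

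The main step is the standard star-cover argument. Since the cover is star-like, the common intersection $U_0=U_1\cap\dots\cap U_n$ is a nonempty almost basic set. I would try to construct, for each $i$, an extension operator $E_i$ that takes a LIP function on $U_0$ to a LIP function on all of $U$. More usefully, I want operators that extend LIP functions from $U_{i_0\dots i_k}$ to larger intersections in a way compatible with restrictions. The product-sum representation gives a natural choice. Order the elements of $U_0$ as $y_1,y_2,\dots$. Every LIP function $g$ on an almost basic set $Y\supseteq U_0$ has a product-sum in the nodes $y_1,y_2,\dots$ (via \cite{LIP}, Theorem 1, applied to an enumeration of $Y$ beginning with the elements of $U_0$... but nodes outside $U_0$ interfere).

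The cleaner route is the following. A LIP function on $U_0$ is determined by integer coefficients $a_k$ with respect to the nodes $y_j$. The formal product-sum $\sum a_k\prod_{j\le k}(x-y_j)$ defines a function on all of $\mathbb N$, since at each $x$ it is an infinite sum. So I would instead extend by the following rule: write $g|_{U_0}$ as a product-sum in the $y_j$, and check that this same series converges to a LIP function on $U$. This is the crux and the main obstacle. Convergence on a point $x\notin U_0$ requires $\prod_{j\le k}(x-y_j)$ to tend to $0$ in the appropriate sense, but no such topology makes integer sums converge. So the approach must change.

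The fallback is to use Lemma \ref{Lstar} together with induction on $n$ via Mayer--Vietoris, mimicking Theorem \ref{H1nest}. Write $U=U'\cup U_n$ with $U'=U_1\cup\dots\cup U_{n-1}$ almost star-like, and note that $U'\cap U_n=\bigcup_{i<n}(U_i\cap U_n)$ is again almost star-like with $n-1$ pieces. The \v{C}ech complex for $C$ fits into a short exact sequence of complexes relating the complexes of $C'=\{U_i\}_{i<n}$, of $\{U_n\}$, and of the induced cover of $U'\cap U_n$. Cohomology vanishing in degrees $\ge1$ for the two smaller star-like covers (by induction) then reduces the claim to surjectivity in degree $0$: the restriction map $\textrm{LIP}(U')\oplus\textrm{LIP}(U_n)\to\textrm{LIP}(U'\cap U_n)$ must be onto. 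Here Lemma \ref{Lstar} identifies $\check{H}^0$ of each star-like cover with LIP functions on the union, and surjectivity is exactly Theorem \ref{MainH1} applied with $V=U'\cap U_n$. The induction base $n=1$ is trivial. The step I expect to need care is verifying exactness of the Mayer--Vietoris sequence of \v{C}ech complexes for this decomposition of the cover, which is standard combinatorics of alternating cochains.
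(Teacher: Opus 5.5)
Your Mayer--Vietoris argument is correct and is essentially the paper's proof. The paper first kills the components of the cocycle on index sets avoiding $n$, using the case $C'=\{U_i\}_{i<n}$, and then treats what remains as a lower-degree cocycle for the induced cover $\{U_i\cap U_n\}_{i<n}$ of $U'\cap U_n$; this is exactly the diagram chase for your sequence $0\to K^\bullet\to C^\bullet(C)\to C^\bullet(C')\to 0$, where $K^\bullet$ is the subcomplex of cochains supported on index sets containing $n$.

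The only real difference is degree $1$: you get vanishing directly from the surjectivity supplied by Theorem \ref{MainH1}, while the paper cites Theorem \ref{H1nest}, which for star-like sets is itself iterated use of Theorem \ref{MainH1} via Corollary \ref{GlueZL}. The abandoned extension-operator paragraph can simply be deleted.
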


\begin{proof} We essentially use a joint induction on $k$ and $n$. Specifically, suppose the statement is false, and $U=U_1\cup \ldots \cup U_n$ is a counterexample with the smallest $k$ and the smallest $n$ for that $k;$ we will show that it is not possible. From Theorem \ref{H1nest}, $k\geq 2.$ Also, $n\geq k+1$ by the definition of $\check{\textrm{C}}$ech cohomology.

Take a representative of an element of  $\check{H}^k_C(U,\mathcal{LIP}),$  which is a collection of LIP functions $f_{\Omega}$ on $U_{\Omega}=\bigcap \limits_{i\in \Omega} U_i$ for all subsets $\Omega$ of $\{1,\ldots, n\}$ of cardinality $k+1,$ which is exact.This means that for any set $I=\{i_1,\ldots,i_{k+2}\}$ with $1\leq i_1<\ldots <i_{k+2}\leq n$ the alternating sum $\sum \limits_{j=1}^n  (-1)^j  f_{I\setminus \{i_j\}}$ is $0$ on $U_{\Omega}$ (Here, again, we suppress in the notation the obvious restriction maps from $\textrm{LIP}(U_I)$ to $\textrm{LIP}(U_{I\setminus \{i_j\}}) $).

Consider only those $\Omega$ that do not contain $n.$ By the minimality of $n$, $\check{H}^k(U_1\cup \ldots \cup U_{n-1}=0,$ so by subtracting a suitable exact element from our representative, we can assume that $f_{\Omega}$ is nonzero only for $\Omega$ that contain $n.$ Now we consider a new set $U'=\bigcup \limits_{i=1}^{n-1} (U_i\cap U_n).$ The closed element that we have at the level $k+1$ for $U$ can be viewed as a closed element of level $k$ for $U'.$ By minimality of $k$, it is exact, which means that there is a collection of $f'_{\Omega '}$ for $\Omega '$ being subsets of cardinality $k-1$ of $\{1,\ldots,n-1\}$ such that our element is obtained from it by the usual coboundary map, i.e., taking an alternating sum of restrictions. This  collection of $f'_{\Omega '}$ can now be viewed as a collection at level $k$ for the original $U$. And if we augment it by zero functions for all $\Omega$ that do not contain $n,$ the coboundary map will send it to our representative, proving its exactness.
\end{proof}

\section{Conjectures and Comments}

Based on the available results and our intuition, we propose the following conjecture.

\begin{conjecture}
For all almost basic sets $U$ for all $k>0$ $H^k(U, \mathcal{LIP})=0.$  
\end{conjecture}

This conjecture would imply that $\check{C}$ech cohomology with respect to any cover by almost basic sets calculates the correct sheaf cohomology of $\mathcal{LIP}$ (cf. \cite{Taylor}, Theorem  7.8.5).  In particular, by Theorem \ref{THkstar} the vanishing of higher cohomology should generalize to almost star-like sets. One would also like to identify some natural category of  sheaves of modules over the $\mathcal{LIP}$ sheaf for which the same kind of vanishing would hold. 

Clearly, the obstruction in Theorem \ref{TH1not0} can be generalized significantly. It would be very interesting to figure out the exact structure of $H^1(U_{pq}, \mathcal{LIP})$ for all odd primes $p$ and $q$, where $U_{pq} = \mathbb N \setminus pq\mathbb N.$ There are reasons to believe that the Quadratic Reciprocity Law for $p$ and $q$ can be reproved or reinterpreted using this non-zero group or, perhaps, $H^1(U_{2pq}, \mathcal{LIP})$ or $H^2(U_{2pq}, \mathcal{LIP})$. Higher cohomology groups may be related to more complicated constructions, like triple symbols. 

In this paper we worked in the classical setting of Kirch topology on $\mathbb N.$ But the parallel constructions for $\mathbb Z \setminus \{0\}$ would work essentially the same way. Perhaps, the main thing that will be lost is the discrete derivative identity from Remark \ref{Rpowerseries}.  And in the proofs of some theorems we would choose an ordering on $\mathbb Z \setminus \{0\}$ to talk about ``first $n$ elements'' of its subsets. This clearly opens the door to generalizations to other rings, in particular to Dedekind domains (cf. \cite{Clarketal}).

Besides changing the input space, we can also change the target. In particular, it makes sense to look at localizations of $\mathbb Z$ (cf. \cite{MithunThesis}). Another natural target is integers in cyclotomic fields. Including them would allow to incorporate in our theory the general Dirichlet characters. 

As mentioned above, the first test of applicability for our theory may be to reprove or reinterpret the Quadratic Reciprocity Law. Higher reciprocity laws and Dirichlet Theorem on primes in arithmetic progressions are (perhaps, unreasonably ambitious) next goals. For applications like this, one should try to get some natural collection of LIP and locally LIP functions. One may also want to develop a version of our theory that incorporates the ``infinite prime'': include some conditions on the growth of LIP functions. These conditions would have to be sufficiently lenient to get a nontrivial theory (cf. \cite{LIP}).

Number-theoretic applications and meaning of Kirch topology have not been studied much. For example, Quadratic Reciprocity Law and Dirichlet's Theorem imply that the set of squares is closed in $\mathbb N.$ It is also connected and has a connected complement. This is a particular case of several natural phenomena (cf. \cite{BorisovSengupta}).

One would like to explore any possible connections with Arithmetic Topology, which is a much more sophisticated theory that claims analogy between $\mathbb Z$ and its finite extensions and the topology of three-dimensional manifolds (cf. \cite{Morishita}).

There must also exist strong connections with the p-adic Analysis. After all, product-sums converge with respect to the trivial norm on $\mathbb Z,$ so convergence with respect to the p-adic norm would be its close relative. Incidentally, for an element $a$ of the p-adic integers $\mathbb Z_p$ and an infinite subset $V$ of $\mathbb Z$ the product-sum for any LIP function $f$ on $V$ converges p-adically at $a$ if and only if there are infinitely many $v_i\in V$ such that $|v_i-a|_p <1.$ Moreover, in this case the sum does not depend on how we count elements of $V$ (cf. \cite{MithunThesis}). The fact that we do not need $a$ to be in the p-adic closure of $V$, i.e. congruences modulo higher power of $p$ do not come into play, seems intrinsically related to the great synergy between Kirch topology and LIP functions, to which this paper is a testament.

\vskip10pt\noindent {\bf Acknowledgments.} The author thanks Inna Sysoeva for many fruitful conversations regarding locally LIP functions in general and this paper in particular.

\end{document}